\newtheorem{theorem}{Theorem}[section]
\newtheorem{lemma}[theorem]{Lemma}
\newtheorem{corollary}[theorem]{Corollary}
\newtheorem{fact}[theorem]{Fact}
\newtheorem{question}[theorem]{Question}
\newtheorem{theoremintro}{Theorem}
\theoremstyle{definition}
\newtheorem{definition}[theorem]{Definition}
\newtheorem{example}[theorem]{Example}
\newtheorem{convention}[theorem]{Convention}
\theoremstyle{remark}
\newtheorem{remark}[theorem]{Remark}
\numberwithin{equation}{section}
\DeclareMathOperator{\diam}{diam}
\DeclareMathOperator{\Sym}{Sym}
\DeclareMathOperator{\dis}{dis}
\DeclareMathOperator{\sep}{sep}
\DeclareMathOperator{\supp}{supp}
\DeclareMathOperator{\graph}{Gr}
\newcommand{\N}{\mathbb{N}}
\newcommand{\R}{\mathbb{R}}
\newcommand{\GH}{d_{GH}}
\newcommand{\card}[1]{\# {#1}}
\newcommand{\sint}[1]{#1^\circ}
\newcommand{\GHcpt}{\mathcal{M}}
\newcommand{\GHeq}[1]{\mathcal{M}_{[#1]}}
\newcommand{\GHleq}[1]{\mathcal{M}_{\leq #1}}
\newcommand{\GHfin}{\mathcal{M}_{<\omega}}
\newcommand{\X}{\mathcal{X}}
\newcommand{\Xeq}[1]{\mathcal{X}_{[#1]}}
\newcommand{\Xleq}[1]{\mathcal{X}_{\leq #1}}
\newcommand{\Xfin}{\mathcal{X}_{<\omega}}
\newcommand{\sym}[1]{\Sym(#1)}
\title[The Gromov-Hausdorff and Gromov-Prokhorov spaces]{Topological dimension of the Gromov-Hausdorff and Gromov-Prokhorov spaces}
\author[H. Nakajima]{Hiroki Nakajima}
\address{Graduate School of Science and Engineering, Ehime University, Matsuyama, 790-8577, Japan}
\email{nakajima.hiroki.nz@ehime-u.ac.jp}
\author[T. Yamauchi]{Takamitsu Yamauchi}
\address{Graduate School of Science and Engineering, Ehime University, Matsuyama, 790-8577, Japan}
\email{yamauchi.takamitsu.ts@ehime-u.ac.jp}
\author[N. Zava]{Nicol\`o Zava}
\address{Institute of Science and Technology Austria (ISTA), 3400 Klosterneuburg, Austria}
\email{nicolo.zava@gmail.com}
\subjclass[2020]{51F99, 
54F45, 
53C23, 
54E35, 
60D05, 
49Q22. 
}
\keywords{Gromov-Hausdorff distance, Gromov-Prokhorov distance, box distance, topological dimension, Hilbert cube, topological embedding.}
\thanks{The first and second named authors were supported by JSPS KAKENHI Grant Numbers 22K13908 and 24K06739, respectively. The third named author was supported by the FWF Grant, Project number I4245-N35.}
\begin{document}

\maketitle

\begin{abstract}
The Gromov-Hausdorff distance is a dissimilarity metric capturing how far two spaces are from being isometric. The Gromov-Prokhorov distance is a similar notion for metric measure spaces. In this paper, we study the topological dimension of the Gromov-Hausdorff and Gromov-Prokhorov spaces. We show that the dimension of the space of isometry classes of metric spaces with at most $n$ points endowed with the Gromov-Hausdorff distance is $\frac{n(n-1)}{2}$, and that of mm-isomorphism classes of metric measure spaces whose support consists of $n$ points is $\frac{(n+2)(n-1)}{2}$. Hence, the spaces of all isometry classes of finite metric spaces and of all mm-isomorphism classes of finite metric measure spaces are strongly countable dimensional. If, instead, the cardinalities are not limited, the spaces are strongly infinite-dimensional.
\end{abstract}

\section{Introduction}

In geometry and topology, and indeed throughout mathematics, a fundamental problem is detecting whether two structures are the same up to isomorphism. A subsequent intuitive question is the following: how far are two structures from being isomorphic? A quantitative, more nuanced answer enriches our understanding of the mathematical structures considered. A prominent example can be found in the study of metric spaces. Let $X$ and $Y$ be two metric spaces embedded into a common ambient metric space $(Z,d)$. The first option to compare them is the {\em Hausdorff distance}. It is defined as the infimum $\varepsilon>0$ such that, for every pair of points $x\in X$ and $y\in Y$, there are $y_x\in Y$ and $x_y\in X$ satisfying $d(x,y_x)<\varepsilon$ and $d(y,x_y)<\varepsilon$. In \cite{gromov:81}, Gromov removed the dependency on the ambient space and considered the distance between two metric spaces $X$ and $Y$ to be the infimum Hausdorff distance of isometric copies of $X$ and $Y$ into a common metric space. For the naturality of the approach, he still called it Hausdorff distance, whereas it is now known as {\em Gromov-Hausdorff distance} as a recognition of the important applications he derived with it. Let us mention that similar concepts can be already found in the earlier papers \cite{edwards:75} and \cite{kadets:75}. 

The theory of metric measure spaces followed a similar developmental path. Given a complete separable metric space, several metrics have been introduced and studied to compare two Borel probability measures on it. We refer to \cite{villani:03} for a wide introduction. Among these distances we can mention, for example, the Wasserstein distances and the Prokhorov distance (also known as L\'evy-Prokhorov distance). The latter is defined as follows: given two Borel probability measures $\mu$ and $\nu$ on a complete separable metric space $X$, their {\em Prokhorov distance} is
$$d_{Pr}(\mu,\nu)=\inf\{\varepsilon>0\mid\forall A\in\mathfrak B(X),\,\mu(A)\leq\nu(A^\varepsilon)+\varepsilon\},$$
where $A^\varepsilon=\bigcup_{a\in A}B(a,\varepsilon)$. 
Again with an initial contribution of Gromov (\cite{gromov:81}), some of these distances have been extended to compare metric measure spaces, i.e., complete separable metric spaces with Borel probability measures. In particular, let us recall the {\em Gromov-Prokhorov distance}, defined as the infimum of the Prokhorov distances achievable by isometrically embedding the two metric measure spaces into a common complete separable metric space (the probability measures compared are the push-forwards induced by the two isometric embeddings). It was proved in \cite{loehr:13} that the Gromov-Prokhorov distance is bi-Lipschitz equivalent to another notion introduced by Gromov (\cite{gromov:81}), the box distance. In particular, the Gromov-Prokhorov distance and the box distance share the same topological properties. Another distance between metric measure spaces, which has been played a crucial role in the computational topology since its introduction in \cite{memoli:11}, is the Gromov-Wasserstein distance.

In this paper, we focus on the space $\GHcpt$ of isometry classes of non-empty compact metric spaces endowed with the Gromov-Hausdorff distance, sometimes called the {\em Gromov-Hausdorff space}, and on the space $\X$ of mm-isomorphism classes of metric measure spaces equipped with the box distance and their subspaces. Let us recall once again that $\X$ is homeomorphic---actually, bi-Lipschitz equivalent---to the {\em Gromov-Prokhorov space} consisting of mm-isomorphism classes of metric measure spaces equipped with the Gromov-Prokhorov distance. In particular, we investigate the subspaces $\GHfin$, $\Xfin$, $\GHleq{n}$ and $\Xleq{n}$ consisting of objects with finitely many points and with at most $n$ points, respectively. 
To investigate their complexity, let us consider two questions:
\begin{enumerate}[(1)]
	\item What spaces can be embedded into $\GHcpt$ and $\X$ and their subspaces? 
	\item In which spaces can $\GHcpt$ and $\X$ and their subspaces be embedded?
\end{enumerate}
In the intuitive formulation above, we purposely kept the notion of embeddability vague because it can be widely interpreted---e.g., topological embeddings, bi-Lipschitz embeddings, coarse embeddings (\cite{gromov:93})---depending on the specific viewpoint from which the question is investigated.

Let us consider (1). It is known that the Hilbert cube can be topologically embedded into $\GHcpt$ (\cite[Theorem 1.3]{ishiki:22}). It is shown in \cite[Theorem 4.1]{iliadis-ivanov-tuzhilin:17} that it is possible to isometrically embed in $\GHleq{n(n-1)/2}$ arbitrarily large balls of $\R^n$ with the supremum metric. Furthermore, every finite metric space of $n$ points can be coarsely embedded into $\GHleq{n+1}$ (\cite{zava:24}). At the cost of increasing the number of points, in \cite{ostrovska-ostrovskii:25}, the authors provided an isometric embedding of any metric space with at most $n$ points into $\GHleq{2n+1}$. 

To answer question (2) while still increasing the insight around question (1), dimension notions can be exploited. One of the motivations to introduce a topological notion of dimension was to prove that $\R^m$ and $\R^n$ are homeomorphic if and only if $m=n$. We refer to classic monograph \cite{engelking:95} for bibliography and historical notes. Similarly, the Assouad dimension was introduced in \cite{assouad:83} to study the bi-Lipschitz embeddability of metric spaces, in particular fractals, into $\R^n$. In \cite{zava:24}, the author used asymptotic dimension (\cite{gromov:93}) and Assouad dimension to study coarse and bi-Lipschitz embeddability of $\GHfin$ and $\GHleq{n}$ into Hilbert spaces. Furthermore, it is shown there that $\GHfin$ cannot be coarsely embedded into any uniformly convex Banach space. In addition to their intrinsic importance, the motivation for the study conducted in \cite{zava:24} comes from computational topology. We refer to \cite{pritchard-weighill:24} for a detailed presentation of further applications of dimension theory in computational topology.

In this paper, we focus on the topological dimension of $\GHcpt$, $\X$ and their subspaces. Note that they are separable metric spaces (see {\cite[Proposition 43]{petersen:06} and} \cite[Proposition 4.25]{shioya:16} for the separability of $\GHcpt$ and $\X$, respectively), and so the covering, the small inductive and the large inductive dimensions coincide (\cite[Theorem 1.7.7]{engelking:95}).
We prove the following result.
\begin{theoremintro}\label{thm:dimension}
$\dim\GHleq{n}=\frac{n(n-1)}{2}$, and $\dim\Xleq{n}=\frac{(n+2)(n-1)}{2}$.
\end{theoremintro}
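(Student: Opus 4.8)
The plan is to prove the two formulas in parallel, showing in each case that $\dim(\cdot)\leq D_n$ and $\dim(\cdot)\geq D_n$, where $D_n=\tfrac{n(n-1)}{2}$ for $\GHleq{n}$ and $D_n=\tfrac{n(n-1)}{2}+(n-1)=\tfrac{(n+2)(n-1)}{2}$ for $\Xleq{n}$. Throughout we use that $\GHleq{n}$ and $\Xleq{n}$ are separable metrizable, so the covering dimension is monotone with respect to subspaces, satisfies the countable closed sum theorem, and obeys the Hurewicz theorem on dimension-raising maps; in particular a closed map with finite fibres does not increase the covering dimension (see \cite{engelking:95}). The cases $n\leq 2$ are immediate by inspection, so we assume $n\geq 3$.

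\emph{Upper bounds.} On the labelled point set $\{1,\dots,k\}$, let $M_k\subseteq\R^{k(k-1)/2}$ be the set of tuples $(d_{ij})_{1\leq i<j\leq k}$ of positive reals obeying the triangle inequality, and let $\widetilde M_k=M_k\times\operatorname{int}\Delta^{k-1}$, a subset of a Euclidean space of dimension $\tfrac{k(k-1)}{2}+(k-1)$. Each of $M_k$, $\widetilde M_k$ is the intersection of an open set with a closed set in its ambient Euclidean space, hence $\sigma$-compact, with dimension at most $\tfrac{k(k-1)}{2}$, respectively $\tfrac{k(k-1)}{2}+(k-1)$. Forgetting the labelling defines surjections $\pi_k\colon M_k\to\GHeq{k}$ and $\widetilde\pi_k\colon\widetilde M_k\to\Xeq{k}$ onto the subspaces of objects with exactly $k$ points, respectively with support of exactly $k$ points. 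Both are continuous: for $\pi_k$ this is the inequality $\GH(X_d,X_{d'})\leq\tfrac12\|d-d'\|_\infty$, and for $\widetilde\pi_k$ one parametrizes the two metric measure spaces over $([0,1],\mathrm{Leb})$ by the partitions of $[0,1]$ cut out by the mass vectors and couples along their common refinement, bounding the box distance by a quantity that tends to $0$ with $\|d-d'\|_\infty$ and $\|p-p'\|_\infty$. Furthermore the fibres of $\pi_k$ and $\widetilde\pi_k$ have at most $k!$ elements, since two labellings of isometric (resp.\ mm-isomorphic) objects differ by an element of $\sym{k}$. Writing $M_k$ and $\widetilde M_k$ as countable unions of compacta, the restriction of $\pi_k$ (resp.\ $\widetilde\pi_k$) to each piece is a continuous finite-to-one map into a Hausdorff space, hence closed; by the Hurewicz theorem its image has dimension $\leq\tfrac{k(k-1)}{2}$ (resp.\ $\leq\tfrac{k(k-1)}{2}+(k-1)$). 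Thus $\GHeq{k}$ and $\Xeq{k}$ are countable unions of closed subsets of those dimensions, and since $\GHleq{n}=\bigcup_{k=1}^{n}\GHeq{k}$ and $\Xleq{n}=\bigcup_{k=1}^{n}\Xeq{k}$, the countable closed sum theorem yields $\dim\GHleq{n}\leq\max_{k\leq n}\tfrac{k(k-1)}{2}=\tfrac{n(n-1)}{2}$ and $\dim\Xleq{n}\leq\max_{k\leq n}\tfrac{(k+2)(k-1)}{2}=\tfrac{(n+2)(n-1)}{2}$.

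\emph{Lower bounds.} It suffices to embed topologically a closed Euclidean ball of dimension $D_n$. Identify the relevant labelled data with a subset of a Euclidean space $E$ of dimension $D_n$ (namely $M_n\subseteq\R^{n(n-1)/2}$ for $\GHleq{n}$, and $\widetilde M_n$ inside the affine span of $\R^{n(n-1)/2}\times\Delta^{n-1}$ for $\Xleq{n}$), on which $\sym{n}$ acts by permuting labels; for $n\geq 3$ this action is faithful. Pick $z^{0}$ in this subset in ``general position''---all pairwise distances distinct and close to $1$, and, in the second case, all masses distinct and positive---so that, by finiteness of $\sym{n}$, the orbit $\sym{n}\cdot z^{0}$ is $3\delta$-separated in the $\ell^{\infty}$ metric for some $\delta>0$ small enough that the closed $\ell^{\infty}$-ball $B$ of radius $\delta$ about $z^{0}$ is contained in $M_n$, respectively $\widetilde M_n$ (strict triangle inequalities and positivity of masses persist under small perturbations). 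Then $\dim B=D_n$. The map $\Phi$ sending a labelled object to its isometry (resp.\ mm-isomorphism) class, viewed as a map $B\to\GHleq{n}$ (resp.\ $B\to\Xleq{n}$), is continuous by the estimates above. It is injective: if $\Phi(x)=\Phi(y)$ for $x,y\in B$, then $y=\sigma\cdot x$ for some $\sigma\in\sym{n}$, and if $\sigma\neq\mathrm{id}$ then $0=\|\sigma\cdot x-y\|_\infty\geq\|\sigma\cdot z^{0}-z^{0}\|_\infty-\|x-z^{0}\|_\infty-\|y-z^{0}\|_\infty\geq 3\delta-\delta-\delta>0$, a contradiction; hence $x=y$. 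A continuous injection from a compact space into a Hausdorff space is a topological embedding, so $B$ embeds topologically into $\GHleq{n}$ (resp.\ $\Xleq{n}$), and by monotonicity $\dim\GHleq{n}\geq\tfrac{n(n-1)}{2}$ and $\dim\Xleq{n}\geq\tfrac{(n+2)(n-1)}{2}$. Combining with the upper bounds completes the proof.

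\emph{Main obstacle.} The point I expect to require the most care is the continuity of $\widetilde\pi_k$---equivalently, that the box distance between two fully supported $k$-point metric measure spaces is controlled by the Euclidean distance between the associated distance-matrix and mass-vector data---which must be read off from the definition of the box distance by producing explicit parametrizations by $([0,1],\mathrm{Leb})$ together with a large common subset on which the two distance functions nearly coincide; some care with the exact normalization of the box distance (and the cited relation to the Gromov--Prokhorov distance) is needed here. The remaining ingredients---$\sigma$-compactness of $M_k$ and $\widetilde M_k$ (so that no properness hypothesis on $\pi_k,\widetilde\pi_k$ is required), the Lipschitz estimate for $\GH$, the faithfulness of the $\sym{n}$-action, and the compact-to-Hausdorff embedding principle---are routine.
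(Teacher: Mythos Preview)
Your overall strategy---parametrising $\GHeq{k}$ and $\Xeq{k}$ by labelled data modulo $\sym{k}$, then combining the countable closed sum theorem for the upper bound with a direct embedding of a Euclidean ball for the lower bound---is close in spirit to the paper's, but the upper-bound step contains a genuine error. You assert that ``a closed map with finite fibres does not increase the covering dimension'' and attribute this to Hurewicz; this is false. The standard counterexample is the continuous surjection from the Cantor set $C$ onto $[0,1]$ given by $\sum_i a_i3^{-i}\mapsto\sum_i(a_i/2)2^{-i}$ (with $a_i\in\{0,2\}$): it is closed (compact domain, Hausdorff target) and at most $2$-to-$1$, yet raises dimension from $0$ to $1$. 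Hurewicz's dimension-raising theorem only gives $\dim Y\le\dim X+(m-1)$ for closed surjections with fibres of at most $m$ points, while his dimension-lowering theorem gives the opposite inequality $\dim X\le\dim Y$ when fibres are zero-dimensional; neither yields $\dim Y\le\dim X$.

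The repair is to use what is special about $\pi_k$ and $\widetilde\pi_k$: their fibres are precisely $\sym{k}$-orbits, so on any $\sym{k}$-invariant compactum $K$ (and one may always saturate the pieces) the map $\pi_k|_K$ factors as the orbit map $K\to K/\sym{k}$---which is \emph{open}, since the saturation of an open set under a finite group of homeomorphisms is open---followed by a continuous bijection $K/\sym{k}\to\pi_k(K)$ from a compact space to a Hausdorff space, hence a homeomorphism. Nagami's theorem (Fact~\ref{fact:nagami}) then gives $\dim\pi_k(K)=\dim(K/\sym{k})=\dim K$, and the rest of your argument goes through. This openness is precisely what the paper establishes globally in Lemma~\ref{lem:Phigb} (using Lemmas~\ref{lem:relation-bijection2} and~\ref{lem:bijection-coupling-measure2} to invert the distance estimates), and it is where the real work lies---not in the continuity of $\widetilde\pi_k$, which the paper dispatches by citing \cite[Lemma~4.24]{shioya:16}. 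Your lower-bound argument, embedding a ball about a point with trivial $\sym{n}$-stabiliser, is correct and is a pleasant alternative to the paper's route, which extracts both inequalities simultaneously from Nagami once openness is known.
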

Therefore, $\GHleq{n}$ and $\Xleq{n}$ cannot be topologically embedded into any space whose dimension is strictly smaller than $\frac{n(n-1)}{2}$ and $\frac{(n+2)(n-1)}{2}$, respectively. 
As another consequence of Theorem \ref{thm:dimension}, $\GHfin$ and $\Xfin$ are strongly countable dimensional, i.e., each of them can be represented as a union of countably many finite-dimensional closed subspaces. Furthermore, we show the following result, which is similar to the case of $\GHcpt$ due to \cite{ishiki:22}. 
\begin{theoremintro}
\label{thm:HC-X}
The Hilbert cube can be topologically embedded into $\X$.
\end{theoremintro}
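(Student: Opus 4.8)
The plan is to produce an explicit family of metric measure spaces, parametrised by the Hilbert cube $[0,1]^{\N}$, whose mm-isomorphism classes depend on the parameter via a topological embedding. In contrast with the proof for $\GHcpt$ in \cite{ishiki:22}, where one deforms the metric, here I will keep a single compact metric space fixed and vary only the measure; this reduces the whole construction to the (easy to control) weak topology on probability measures. Concretely, fix a compact metric space $K$ whose only self-isometry is the identity --- for instance $K=\{0\}\cup\{1/n:n\ge 1\}\subset\R$, since $0$ is the unique non-isolated point of $K$, hence fixed by every self-homeomorphism and in particular by every self-isometry, after which each $1/n$ is determined as the unique point at distance $1/n$ from $0$. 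Write $q_0=0$ and $q_n=1/n$ for $n\ge 1$, and for $t=(t_n)_{n\ge1}\in[0,1]^{\N}$ set $w_0=1$, $w_n(t)=2^{-n}(1+t_n)$ for $n\ge 1$, $S(t)=\sum_{n\ge0}w_n(t)\in[2,3]$, and $\mu_t=S(t)^{-1}\sum_{n\ge0}w_n(t)\,\delta_{q_n}$. Each $\mu_t$ is a Borel probability measure of full support on $K$, so $(K,\mu_t)$ is a metric measure space with underlying space exactly $K$, and I define $\Phi\colon[0,1]^{\N}\to\X$ by letting $\Phi(t)$ be the mm-isomorphism class of $(K,\mu_t)$.

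The next step is to verify that $\Phi$ is injective and continuous. For injectivity, if $\Phi(s)=\Phi(t)$ then, since $\supp(\mu_s)=\supp(\mu_t)=K$, there is a measure-preserving isometry of $K$; by rigidity it is the identity, so $\mu_s=\mu_t$, and comparing point masses via $\mu_t(\{q_n\})/\mu_t(\{q_0\})=2^{-n}(1+t_n)$ recovers each $t_n$, hence $s=t$. For continuity, if $s^{(k)}\to t$ in $[0,1]^{\N}$ then $s^{(k)}_n\to t_n$ for every $n$; since the point masses are dominated on the tail by the summable sequence $(2^{-n})_{n\ge1}$, it follows that $\mu_{s^{(k)}}\to\mu_t$ in total variation, and hence $d_{Pr}(\mu_{s^{(k)}},\mu_t)\to 0$. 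Using the identity maps of $K$ as a common isometric embedding, the Gromov-Prokhorov distance between $(K,\mu)$ and $(K,\nu)$ is at most $d_{Pr}(\mu,\nu)$; since the box distance is bi-Lipschitz equivalent to the Gromov-Prokhorov distance (\cite{loehr:13}), this yields $\Phi(s^{(k)})\to\Phi(t)$ in $\X$.

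Finally, $[0,1]^{\N}$ is compact and $\X$, being a metric space, is Hausdorff, so the continuous injection $\Phi$ is automatically closed, hence a homeomorphism onto its image; that is, $\Phi$ is a topological embedding of the Hilbert cube into $\X$. I do not anticipate a serious obstacle: once one commits to varying the measure over a fixed \emph{rigid} space, the remaining steps are routine estimates together with the standard compact-to-Hausdorff embedding lemma. The only point that genuinely requires care --- and the reason for insisting that $K$ be rigid and that every $\mu_t$ have full support --- is ruling out accidental measure-preserving isometries that might identify $(K,\mu_s)$ with $(K,\mu_t)$ for $s\neq t$.
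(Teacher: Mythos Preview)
Your proof is correct and takes a genuinely different route from the paper's. The paper reuses the comb spaces $C_t$ from its proof of Theorem~\ref{thm:HC-GHcpt}, equips each $C_t$ with a tailored Borel probability measure $\mu_t$, and then establishes continuity of $t\mapsto[C_t,d_t,\mu_t]$ by an explicit hands-on estimate of the box distance via~\eqref{eq:box}: it builds a coupling $\pi$ and a closed relation $S$ between $(C_s,\mu_s)$ and $(C_t,\mu_t)$ and bounds $\max\{1-\pi(S),\dis S\}$ directly. Thus both the metric and the measure vary with $t$, and the argument stays entirely inside the box-distance formalism.

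Your approach, by contrast, freezes the metric on a single rigid compact space and lets only the measure move. Rigidity disposes of injectivity in one line, and continuity reduces to the elementary observation that the point masses converge in $\ell^1$ (hence in total variation, hence in Prokhorov distance), after which the L\"ohr bi-Lipschitz equivalence~\cite{loehr:13}, already invoked in the introduction, transfers this to the box distance. The trade-off is clear: your argument is shorter and more conceptual, but it imports an external equivalence theorem; the paper's argument is longer and more computational, but it is self-contained within the optimal-transport description~\eqref{eq:box} of $\square$ and, as a bonus, exhibits the embedding as a measure-theoretic thickening of the one already built for $\GHcpt$.
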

Hence both $\GHcpt$ and $\X$ are strongly infinite-dimensional (see \cite[Theorem 1.8.2, Definition 6.1.1 and Proposition 6.1.3]{engelking:95}), and in particular, they are not countable-dimensional, i.e., each of them cannot be represented as a union of countably many finite-dimensional subspaces (\cite[Theorem 6.1.10]{engelking:95}).
Note that $\GHfin$ and $\Xfin$ are dense in $\GHcpt$ and $\X$, respectively (see \cite[Proposition 4.20]{shioya:16} for $\X$).

The paper is organised as follows. In Section \ref{sec:background}, we provide the needed background definitions and results concerning the Gromov-Hausdorff distance (\S\ref{sub:GH}), the box distance (\S\ref{sub:box}) and the covering dimension (\S\ref{sub:dim}). In Sections \ref{sec:dimensions} and \ref{sec:proof-of-theoremB},
we prove Theorems \ref{thm:dimension} and \ref{thm:HC-X}, respectively.

\section{Background and definitions}\label{sec:background}

Let $\mathbb{N}$ denote the set of all strictly positive integers.
Given a metric space $(X,d_X)$, $x \in X$ and $\varepsilon>0$, let us denote by $B_{d_X}(x,\varepsilon)$ the open ball in $(X,d_X)$ around $x$ with radius $\varepsilon$.

\subsection{Gromov-Hausdorff distance}\label{sub:GH}

Let $(X,d_X)$ and $(Y,d_Y)$ be metric spaces.
For a relation $R \subset X\times Y$, the \emph{distortion} of $R$, denoted by $\dis R$,  is defined by
\begin{align*}
\dis R = \sup_{(x_1,y_1), (x_2,y_2)\in R} |d_X (x_1,x_2) - d_Y(y_1,y_2)|.
\end{align*}
A subset $R \subset X\times Y$ is called a \emph{correspondence} between $X$ and $Y$ if $R\cap (\{x\}\times Y) \ne \emptyset \ne R\cap (X \times \{y\})$ for every $(x,y) \in X\times Y$.
Let $\mathcal{R}(X,Y)$ denote the all correspondences between $X$ and $Y$.
Let $d_{GH}$ denote the Gromov-Hausdorff distance (see \cite[Definition 7.3.10]{burago-burago-ivanov:01}).
Then the following holds (see \cite[Theorem 7.3.25]{burago-burago-ivanov:01}):
\begin{align*}
d_{GH} ((X,d_X),(Y,d_Y)) = \frac{1}{2} \inf_{R \in \mathcal{R}(X,Y)} \dis R.
\end{align*}

Let $\GHcpt$ denote the metric space of all isometry classes of compact metric spaces endowed with the Gromov-Hausdorff distance $\GH$ (see \cite[Theorem 7.3.30]{burago-burago-ivanov:01}), and for $n \in \N$, let $\GHeq{n}$ (resp., $\GHleq{n}$, $\GHfin$) denote the metric subspace of $\GHcpt$ consisting of all isometry classes of metric spaces with precisely $n$ points (resp., at most $n$ points, finite points).
We refer to \cite{tuzhilin:20} for properties of $\GHcpt$ and its subspaces.

\begin{convention}
When we say that $X$ is a metric space, $X$ is assumed to admit a metric $d_X$ unless otherwise stated.
We use $[X,d_X]$ (or $[X]$ when there is no confusion) to denote the isometric class of $(X,d_X)$. 
\end{convention} 

\subsection{Metric measure spaces and box distance}\label{sub:box}

By a \emph{metric measure space} $(X,d_X,\mu_X)$, we mean a complete separable metric space $(X,d_X)$ with a Borel probability measure $\mu_X$ on $X$.
The \emph{support} of $\mu_X$, denoted by $\supp\mu_X$, is the set of all $x \in X$ such that $\mu_X(U)>0$ for every open neighborhood $U$ of $x$. 

Let $(X,d_X,\mu_X)$ and $(Y,d_Y,\mu_Y)$ be metric measure spaces.
They are said to be \emph{mm-isomorphic} if there exists an isometry $f \colon \supp \mu_X \to \supp \mu_Y$ such that $f_*\mu_X=\mu_Y$, where $f_*\mu_X$ denote the {\em push-forward measure of $\mu_X$ by $f$} defined by $f_*\mu_X (B)=\mu_X(f^{-1}(B))$ for each Borel subset $B$ of $Y$.

A Borel measure $\pi$ on $X\times Y$ is said to be a \emph{coupling} (or \emph{transport plan}) between $\mu_X$ and $\mu_Y$ if its {\em marginals} are $\mu_X$ and $\mu_Y$, i.e., if $\pi (A\times Y)=\mu_X(A)$ and $\pi(X\times B)=\mu_Y(B)$ for any Borel subsets $A$ and $B$ of $X$ and $Y$, respectively. Let $\Pi(\mu_X,\nu_X)$ denote the set of all couplings between $\mu_X$ and $\mu_Y$.
Let $\square$ denote the box distance (see \cite[Definition 4.4]{shioya:16}).
Let $\mathcal{F}(X\times Y)$ denote the set of all closed subsets of $X\times Y$.
Then the following holds \cite[Theorem 1.1]{nakajima:22}:
\begin{equation}\label{eq:box}
\begin{aligned}
\square ((X,d_X,\mu_X), (Y,d_Y,\mu_Y) ) = \min_{\pi \in \Pi (\mu_X,\mu_Y)} \min_{S \in \mathcal{F}(X\times Y)} \max\{1 -\pi (S), \dis S \}.
\end{aligned}
\end{equation}
While we refer to \cite[Theorem 1.1]{nakajima:22} for the details, we briefly mention that the existence of the minimizer in \eqref{eq:box} follows from the facts that $\Pi(\mu_X,\mu_Y)$ is compact with respect to the weak convergence and that  $\mathcal F(X\times Y)$ is compact with respect to the Kuratowski-Painlev\'{e} convergence.

\begin{convention}
When we say that $X$ is a metric measure space, $X$ is assumed to admit (a metric $d_X$ and) a Borel probability measure $\mu_X$.
We use $[X,d_X,\mu_X]$ (or $[X]$ when there is no confusion) to denote the mm-isomorphic class of $(X,d_X,\mu_X)$. 
Since a metric measure space $X$ is mm-isomorphic to the metric measure space $\supp \mu_X$ with the restricted metric of $d_X$ and the restricted measure of $\mu_X$, we also assume that $X=\supp \mu_X$ unless otherwise stated.
\end{convention} 

Let $\X$ denote the metric space of all mm-isomorphism classes of metric measure spaces spaces endowed with the box distance $\square$ (see \cite[Theorem 4.10]{shioya:16}), and for $n \in \N$, let $\Xeq{n}$ (resp., $\Xleq{n}$, $\Xfin$) denote the metric subspace of $\X$ consisting of all mm-isomorphism classes of metric measure spaces with precisely $n$ points (resp., at most $n$ points, finitely many points). 

\begin{remark} 
It is worth explicitly mentioning that $\GHcpt$ and $\X$ are sets because their objects are separable. 
Furthermore, the box distance always returns finite values (see Fact \ref{fact:trivial_upper_bound}), therefore, there is no need to restrict the family of objects considered. This situation is very different from the classical setting of the Gromov-Hausdorff distance. Indeed, the Gromov-Hausdorff distance between spaces that are not necessarily bounded can be infinite. Compactness is further required to show that two objects have Gromov-Hausdorff distance $0$ if and only if they are isometric.
\end{remark}

The box distance cannot be used to compare objects whose metrics are very different from each other. Indeed, the box distance is trivially bounded by $1$, which is obtained by considering the empty relation in \eqref{eq:box}. A more precise upper bound can be achieved.
\begin{fact}\label{fact:trivial_upper_bound}
Let $(X,d_X,\mu_X)$ and $(Y,d_Y,\mu_Y)$ be two metric measure spaces. Then,
$$\square(X,Y)\leq 1-\min\{\max_{x\in X}\mu_X(\{x\}),\max_{y\in Y}\mu_Y(\{y\})\}.$$
\end{fact}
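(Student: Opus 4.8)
The plan is to exhibit a single explicit pair $(\pi, S)$ that is feasible in the minimization \eqref{eq:box} and achieves the claimed bound. By symmetry of the statement in $X$ and $Y$, I may assume without loss of generality that $\max_{x\in X}\mu_X(\{x\}) \leq \max_{y\in Y}\mu_Y(\{y\})$, and I fix points $x_0\in X$ and $y_0\in Y$ realizing these two maxima, so that $\mu_X(\{x_0\}) \le \mu_Y(\{y_0\})$; write $p = \mu_X(\{x_0\})$. The target upper bound is then $1-p$.

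The main step is to construct a coupling $\pi\in\Pi(\mu_X,\mu_Y)$ that places as much mass as possible on the diagonal-like point $(x_0,y_0)$. Concretely, I would take $\pi = p\,\delta_{(x_0,y_0)} + (\mu_X - p\,\delta_{x_0})\otimes\text{(something)}/\dots$ — more carefully, one standard choice is $\pi = p\,\delta_{(x_0,y_0)} + \frac{1}{1-p}(\mu_X - p\,\delta_{x_0})\otimes(\mu_Y - p\,\delta_{y_0})$ when $p<1$ (and $\pi=\delta_{(x_0,y_0)}$ when $p=1$); one checks its marginals are $\mu_X$ and $\mu_Y$ by a direct computation, using $\mu_X(X)=\mu_Y(Y)=1$. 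Then I take $S = \{(x_0,y_0)\}$, a single point, which is closed. Since $S$ is a singleton, there is only one pair to compare in the supremum defining $\dis S$, namely $(x_0,y_0)$ with itself, giving $\dis S = |d_X(x_0,x_0) - d_Y(y_0,y_0)| = 0$. Meanwhile $\pi(S)\geq \pi(\{(x_0,y_0)\}) \geq p$, so $1-\pi(S)\leq 1-p$. Plugging into \eqref{eq:box} yields
$$\square(X,Y) \leq \max\{1-\pi(S),\dis S\} \leq \max\{1-p,0\} = 1-p,$$
which is the asserted inequality.

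The only mild subtlety — and the step I would be most careful about — is verifying that the proposed $\pi$ genuinely has the correct marginals and is a well-defined Borel probability measure on $X\times Y$ (in particular that the product-measure correction term is nonnegative and normalized correctly), together with handling the degenerate case $p=1$ separately. Everything else is immediate: the singleton $S$ makes $\dis S=0$ automatic, and the bound on $1-\pi(S)$ is just the statement that $\pi$ assigns at least mass $p$ to $(x_0,y_0)$. No compactness or convergence arguments are needed here, since we are only proving an inequality $\square(X,Y)\le\dots$ and hence merely need one feasible competitor rather than the minimizer guaranteed by \cite[Theorem 1.1]{nakajima:22}.
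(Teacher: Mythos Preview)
Your proof is correct and follows the same strategy as the paper: choose $S=\{(x_0,y_0)\}$ so that $\dis S=0$, and build a coupling that places mass $p=\min\{\mu_X(\{x_0\}),\mu_Y(\{y_0\})\}$ on $(x_0,y_0)$. The only difference is in how the coupling is produced: the paper defines the subtransport plan $p\,\delta_{(x_0,y_0)}$ and invokes an abstract extension result (from \cite{shioya:16}) to complete it to a genuine coupling, whereas you write down the explicit completion $\pi = p\,\delta_{(x_0,y_0)} + \tfrac{1}{1-p}(\mu_X - p\,\delta_{x_0})\otimes(\mu_Y - p\,\delta_{y_0})$. Your version is slightly more self-contained, at the cost of the marginal and nonnegativity checks you flag (both of which go through, the latter using $p\le\mu_Y(\{y_0\})$); the paper's version hides that bookkeeping behind the cited lemma.
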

\begin{proof}
Let $\overline x\in X$ and $\overline y\in Y$ be two points such that $\mu_X(\{\overline x\})=\max_x\mu_X(\{x\})$ and $\mu_Y(\{\overline y\})=\max_y\mu(\{y\})$. Consider $\pi^\prime$ the Borel measure on $X\times Y$ defined by
$$\pi^\prime(B)=\min\{\mu_X(\{\overline x\}),\mu_Y(\{\overline y\})\}\cdot\delta_{(\overline x,\overline y)}$$
for every Borel set $B$ of $X\times Y$, where $\delta_{(\overline x,\overline y)}$ indicates the probability measure of $X\times Y$ whose mass is concentrated on the point $(\overline x,\overline y)$. Since $\pi^\prime$ is trivially a {\em subtransport plan between $\mu_X$ and $\mu_Y$} (i.e., for every $A\subseteq X$ and $A^\prime\subseteq Y$, $\pi^\prime(A\times Y)\leq\mu_X(A))$ and $\pi^\prime(X\times A^\prime)\leq\mu_Y(A^\prime)$), there exists $\pi\in\Pi(\mu_X,\mu_Y)$ such that $\pi^\prime(B)\leq\pi(B)$ for every Borel subset $B$ of $X\times Y$ (see, for example, the proof of \cite[Proposition 4.12]{shioya:16}). Clearly, $\pi(\{(\overline x,\overline y)\})=\min\{\mu_X(\{\overline x\}),\mu_Y(\{\overline y\})\}$. By considering the relation $S=\{(\overline x,\overline y)\}$, the desired upper bound is attained.
\end{proof}

\begin{example}[The box distance captures only the small-scale geometry]\label{ex:box_distance_topological}
For every $t>0$, define $X_t=(\{0,1\},d_t,\mu_{\rm unif})$, where $d_t(0,1)=t$, and $\mu_{\rm unif}$ is the uniform probability measure. With abuse of notation, $X_0$ denotes the metric measure space with only one support point. Then,
$$\square(X_s,X_t)=\min\{\lvert t-s\rvert,1/2\}.$$
\begin{proof}
According to Fact \ref{fact:trivial_upper_bound}, $\square(X_s,X_t)\leq 1/2$. Furthermore, the  diagonal relation $\Delta=\{(0,0),(1,1)\}$ and the coupling $\pi$ defined as
$$\begin{pmatrix}
1/2 & 0\\
0 &1/2\end{pmatrix}
$$
shows that $\square(X_s,X_t)\leq\dis\Delta=\lvert t-s\rvert$.
If $\square(X_s,X_t)<1/2$, then it is easy to see that the relation $S$ realising the minimum in \eqref{eq:box} is a correspondence. Every correspondence contains either the diagonal $\Delta$ or the antidiagonal $\Delta^\prime=\{(0,1),(1,0)\}$ whose distortions are $\lvert t-s\rvert$. Therefore, $\dis S\geq\lvert t-s\rvert$.
\end{proof}
In particular, from some point on, the box distance fails to detect the difference between the two metric measure spaces. For example, $\square(X_1,X_t)=1/2$ for every $t\geq 3/2$.
\end{example}

We refer to \cite{kazukawa-nakajima-shioya:24} for a thorough discussion of topological properties of $\X$.

\subsection{Covering dimension}\label{sub:dim}
Let $X$ be a metric space and $\mathcal U$ be a family of subsets of $X$. We say that the {\em order of $\mathcal U$ is at most $n$} if, for every $x\in X$, there are at most $n+1$ distinct elements of $\mathcal U$ containing $x$. Another family $\mathcal V$ of subsets of $X$ is said to {\em refine $\mathcal U$} if, for every $V\in\mathcal V$, there is $U\in\mathcal U$ such that $V\subseteq U$.

\begin{definition}
Let $X$ be a metric space and $n\in\mathbb N\cup\{0\}$. Then, its {\em covering dimension is at most $n$}, and we write $\dim X\leq n$, if every open cover $\mathcal U$ of $X$ admits an open cover $\mathcal V$ that refines $\mathcal U$ and whose order is at most $n$. If no such $n$ exists, we say that the dimension of $X$ is infinite, and denote it by $\dim X=\infty$.
\end{definition}

Let us collect a few facts that will be used in the sequel. For the sake of simplicity, we do not state the results in full generality, and instead we refer to the monograph \cite{engelking:95}. 

\begin{fact}[{see \cite[Theorem 3.1.19]{engelking:95}}]
\label{fact:dim-subspace}
Let $f\colon X\to Y$ be a topological embedding between metric spaces. Then, $\dim X\leq\dim Y$. In particular, if $X$ and $Y$ are homeomorphic, $\dim X=\dim Y$.
\end{fact}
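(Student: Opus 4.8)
The plan is to recognise this as the standard monotonicity of the covering dimension under passage to subspaces, namely \cite[Theorem 3.1.19]{engelking:95}, and to recall its proof. First I would note that the condition ``$\dim X\le n$'' refers only to the open covers of $X$, hence it is invariant under homeomorphisms; since a topological embedding $f$ induces a homeomorphism $X\to f(X)$, this gives $\dim X=\dim f(X)$, and the problem reduces to showing that $\dim M\le\dim Y$ whenever $M$ is a (metric) subspace of $Y$. We may assume $n:=\dim Y<\infty$, as otherwise there is nothing to prove.

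I would first dispose of the case in which $M$ is closed in $Y$. Given an open cover $\mathcal U$ of $M$, write each member as $U=\widetilde U\cap M$ with $\widetilde U$ open in $Y$; then $\{\widetilde U:U\in\mathcal U\}\cup\{Y\setminus M\}$ is an open cover of $Y$, which by hypothesis admits an open refinement $\mathcal V$ of order at most $n$. The family $\{V\cap M:V\in\mathcal V\}$ is then an open cover of $M$ whose order is again at most $n$, and it refines $\mathcal U$: if $V\cap M\ne\emptyset$ then $V$ is not contained in $Y\setminus M$, so $V\subseteq\widetilde U$ for some $U\in\mathcal U$, whence $V\cap M\subseteq U$. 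This yields $\dim M\le n$.

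For a general subspace $M$ I would exploit that metric spaces are perfectly normal, so the open set $G=\bigcup_{U\in\mathcal U}\widetilde U\supseteq M$ is an $F_\sigma$ in $Y$, say $G=\bigcup_{k}F_k$ with each $F_k$ closed in $Y$ (hence closed in $G$) and, by the previous paragraph, $\dim F_k\le n$. The countable closed sum theorem for the covering dimension of metric spaces (see \cite{engelking:95}) then gives $\dim G\le n$. Since each $\widetilde U$ is open in $G$, the family $\{\widetilde U:U\in\mathcal U\}$ is an open cover of $G$; refining it within $G$ to an open cover $\mathcal W$ of order at most $n$ and intersecting with $M$ produces an open cover $\{W\cap M:W\in\mathcal W\}$ of $M$ of order at most $n$ refining $\mathcal U$, exactly as above. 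Hence $\dim M\le n=\dim Y$, and applying the inequality in both directions settles the homeomorphism case.

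The only substantial ingredient is the countable closed sum theorem used in the last step; all the remaining steps are elementary manipulations of open covers. Accordingly, in the paper itself I would simply cite \cite[Theorem 3.1.19]{engelking:95} (and the sum theorem it rests on) rather than reproduce this argument.
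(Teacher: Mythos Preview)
Your proposal is correct, and it aligns with the paper: the paper provides no proof for this fact and simply cites \cite[Theorem 3.1.19]{engelking:95}, exactly as you conclude in your final paragraph. The argument you sketch is the standard one and is sound, including the non-circular handling of arbitrary subspaces via the countable closed sum theorem.
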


\begin{fact}[{see \cite[Theorem 1.8.3]{engelking:95}}]
\label{fact:dim-Rn-In}
For every $n\in\N$, $\dim\R^n=\dim[0,1]^n=n$.
\end{fact}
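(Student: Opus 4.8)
The plan is to prove the two bounds $\dim[0,1]^n\le n$ and $\dim[0,1]^n\ge n$ and then transfer them to $\R^n$: the upper bound I would establish for $\R^n$ directly, while the lower bound $\dim\R^n\ge n$ then follows from $\dim[0,1]^n\ge n$ and Fact \ref{fact:dim-subspace}, since $[0,1]^n$ embeds topologically into $\R^n$. Both halves go by induction on $n$, the case $n=0$ being trivial, and I would freely use that on separable metric spaces the covering dimension agrees with the small and large inductive dimensions, which makes the upper bound particularly transparent.

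\emph{Upper bound.} I would invoke the characterisation (see \cite{engelking:95}) that a separable metric space $X$ satisfies $\dim X\le n$ whenever it has a countable basis $\mathcal B$ of open sets with $\dim\partial U\le n-1$ for every $U\in\mathcal B$. For $X=\R^n$ take $\mathcal B$ to consist of the open boxes $\prod_{i=1}^n(a_i,b_i)$ with rational endpoints; the topological boundary of such a box is the finite union $\bigcup_{i=1}^n(\prod_{j\ne i}[a_j,b_j])\times\{a_i,b_i\}$, and each of these $2n$ pieces is homeomorphic to a closed box of $\R^{n-1}$, hence has dimension $\le n-1$ by the inductive hypothesis; a finite union of closed sets of dimension $\le n-1$ again has dimension $\le n-1$ by the sum theorem. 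This yields $\dim\R^n\le n$, and hence $\dim[0,1]^n\le n$ by Fact \ref{fact:dim-subspace}. (More in the spirit of the covering definition, one may instead exhibit for each $k$ an explicit open cover of $[0,1]^n$ of mesh $<1/k$ and order $\le n$: start from the tiling of $\R^n$ into the half-open cubes $\prod_{i=1}^n[m_i/k,(m_i+1)/k)$, shift the $i$-th ``layer'' of cubes by an amount depending on $i$ so that no $n+2$ of the resulting bricks meet, and finally thicken each brick to a slightly larger open set; the only point to check is that a small enough thickening does not raise the order.)

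\emph{Lower bound.} Here I would use the theorem on partitions, i.e.\ the separator characterisation of covering dimension: if $X$ is separable metric with $\dim X\le n-1$, then for any $n$ pairs of disjoint closed sets $(A_i,B_i)$ there exist closed separators $L_i$ between $A_i$ and $B_i$ (so $X\setminus L_i=U_i\cup V_i$ with $U_i,V_i$ disjoint open, $A_i\subseteq U_i$, $B_i\subseteq V_i$) with $\bigcap_{i=1}^nL_i=\emptyset$. Applying this to $X=[0,1]^n$ with $A_i=\{x:x_i=0\}$, $B_i=\{x:x_i=1\}$, it suffices to reach a contradiction by showing that \emph{any} such family of separators in the cube has $\bigcap_{i=1}^nL_i\ne\emptyset$. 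This is a form of the Poincar\'{e}--Miranda theorem, which I would prove by defining the continuous map $f\colon[0,1]^n\to\R^n$ whose $i$-th coordinate is $\operatorname{dist}(x,L_i)$ taken with sign $+$ on the $U_i$-side and $-$ on the $V_i$-side of $L_i$: a zero of $f$ is exactly a point of $\bigcap_iL_i$, while on $A_i$ and $B_i$ the $i$-th coordinate of $f$ has the sign of $-1$ and $+1$ respectively, so $f|_{\partial[0,1]^n}$ is homotopic in $\R^n\setminus\{0\}$ to a map of nonzero degree onto a small sphere about the origin; a zero-free $f$ would then produce a retraction of $[0,1]^n$ onto $\partial[0,1]^n$, contradicting the no-retraction theorem (equivalently Brouwer's fixed point theorem). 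Hence $\dim[0,1]^n\ge n$, and combined with the upper bound and Fact \ref{fact:dim-subspace} we get $\dim\R^n=\dim[0,1]^n=n$.

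The crux is the lower bound: it is the genuinely topological half of the statement, on a par with Brouwer's fixed point theorem and invariance of domain, so some nontrivial input is unavoidable (a degree argument, or a combinatorial substitute such as the Sperner/KKM lemma). The delicate points are the correct reduction from ``$\dim\le n-1$'' to separators with empty intersection and the sign bookkeeping on the faces of the cube that makes $f$ essential. The upper bound, by contrast, is elementary once the small-boundary basis characterisation is available: the induction and the sum theorem for closed sets do all the work, the only care being the routine estimate in the brick construction mentioned above.
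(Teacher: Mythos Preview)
Your outline is correct and follows the classical textbook route (upper bound via the small inductive dimension and the sum theorem; lower bound via the theorem on partitions reduced to a Brouwer/Poincar\'e--Miranda argument). There is, however, nothing to compare against in the paper: the statement is recorded there as a \emph{fact} with a bare reference to \cite[Theorem 1.8.3]{engelking:95} and is not proved. Your sketch is essentially the proof one finds in Engelking, so in that sense it matches the intended source; just be aware that the paper itself treats this as background and offers no argument of its own.
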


\begin{fact}[{see \cite[Theorem 3.1.8]{engelking:95}}]
\label{fact:countable-sum-thm}
If a metric space $X$ can be represented as the union of a sequence $\{F_k\}_{k\in\N}$ of closed subspaces such that $\dim F_k\leq n$ for any $k \in \N$, then $\dim X \leq n$.
\end{fact}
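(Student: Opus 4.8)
The plan is to verify the covering-dimension inequality through its finite-cover formulation. Since $X$ is metric, hence paracompact and normal, it suffices to show that every \emph{finite} open cover $\{U_i\}_{i=1}^s$ of $X$ admits an open refinement of order at most $n$; moreover, for finite covers of a normal space one may pass freely between open refinements of order $\le n$, open shrinkings of order $\le n$ (with $\overline{V_i}\subseteq U_i$), and closed shrinkings of order $\le n$, using the standard swelling and shrinking lemmas. I would therefore fix a finite open cover $\{U_i\}_{i=1}^s$ and aim to produce a \emph{closed} shrinking $\{V_i\}_{i=1}^s$ of order at most $n$, and then swell it back to an open cover.

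The first substantive step is a single-piece lemma: if $F\subseteq X$ is closed with $\dim F\le n$ and $\{G_i\}_{i=1}^s$ is a finite open cover of $X$, then there is an open shrinking $\{H_i\}_{i=1}^s$ (so $\overline{H_i}\subseteq G_i$ and $\bigcup_i H_i=X$) whose order \emph{at every point of $F$} is at most $n$. To prove it, I would restrict the cover to the subspace $F$, use $\dim F\le n$ to replace $\{G_i\cap F\}$ by an open shrinking of $F$ of order $\le n$, extend these sets to open subsets of $X$ contained in the corresponding $G_i$ (they then cover an open neighbourhood $W\supseteq F$ with order $\le n$ on $F$), choose by normality an open $P$ with $F\subseteq P\subseteq\overline P\subseteq W$, and glue the extended sets with $G_i\setminus\overline P$. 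The resulting family covers $X$, agrees near $F$ with the low-order family (so its order on $F$ is $\le n$), and a final application of the shrinking lemma secures $\overline{H_i}\subseteq G_i$ while only decreasing the order.

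Then I would iterate this lemma along the sequence $\{F_k\}_{k\in\N}$. Starting from $\{U_i^0\}=\{U_i\}$, at stage $k$ apply the lemma with $F=F_k$ to the cover $\{U_i^{k-1}\}$, obtaining an open cover $\{U_i^k\}$ with $\overline{U_i^k}\subseteq U_i^{k-1}$ and order $\le n$ at every point of $F_k$. The inclusions $U_i^k\subseteq U_i^{k-1}$ make membership in each fixed index monotone in $k$, and I set $V_i=\bigcap_{k}U_i^k=\bigcap_k\overline{U_i^k}$, which is closed and contained in $U_i$. Two checks finish the argument. For the order: any $x\in X$ lies in some $F_{k_0}$, and since $V_i\subseteq U_i^{k_0}$ the set $\{i:x\in V_i\}$ is contained in $\{i:x\in U_i^{k_0}\}$, which has at most $n+1$ elements because the order of $\{U_i^{k_0}\}$ on $F_{k_0}$ is $\le n$. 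For the covering: fix $x$; for each fixed $i$ the set of stages $k$ with $x\in U_i^k$ is an initial segment of $\N$, and since every $\{U_i^k\}$ is a cover, $x$ lies in some $U_i^k$ at each stage $k$, so (there being finitely many indices) some index $i$ must retain $x$ at all stages, i.e.\ $x\in\bigcap_k U_i^k=V_i$. Hence $\{V_i\}$ is a closed cover of $X$ of order $\le n$ refining $\{U_i\}$; swelling it to an open cover yields $\dim X\le n$.

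I expect the main obstacle to be the passage to the limit. A naive union of the stage-$k$ covers fails, because the coarse early stages (which may have large order) dominate; the resolution is to take the decreasing \emph{intersection}, which is closed---this is why the proof is organised around closed shrinkings and the swelling lemma---and which automatically inherits order $\le n$ from whichever single stage first handles a given point, while the covering property survives via the initial-segment/pigeonhole observation above. The other delicate point is the single-piece lemma, where the hypothesis $\dim F_k\le n$ is actually consumed, and where normality is needed to transfer a low-order cover of the closed subspace $F_k$ to a controlled cover of the whole space.
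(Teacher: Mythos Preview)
Your argument is correct and is essentially the classical proof of the countable closed sum theorem: reduce to finite covers by normality, establish the single-set lemma (controlling the order on one closed piece), iterate to obtain a nested sequence of shrinkings, pass to the intersection to obtain a closed cover of order $\le n$, and swell. The two points you flag as delicate---the single-piece lemma and the pigeonhole/initial-segment argument guaranteeing that the intersection still covers---are handled correctly.

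There is nothing to compare against in the paper itself: the statement is recorded there as a \emph{fact} with a bare citation to \cite[Theorem~3.1.8]{engelking:95} and no proof is given. Your write-up is, in outline, the proof one finds at that reference, so you have supplied what the paper deliberately omitted rather than diverged from it.
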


We will also use the following fact, which is obtained from \cite[Theorem 4.1]{nagami:60}.
\begin{fact}
\label{fact:nagami}
If $f\colon X\to Y$ is an open 
surjection
between metric spaces such that each fiber $f^{-1}(y)$, for $y \in Y$, is finite, then $\dim X =\dim Y$.
\end{fact}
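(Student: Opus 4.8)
The plan is to deduce this from Nagami's Theorem 4.1 in \cite{nagami:60}, which asserts that an open surjection between metric spaces all of whose fibres are finite preserves the covering dimension (in \cite{nagami:60} this may be stated under the a priori stronger hypothesis that the fibre cardinalities are bounded by a fixed integer; I indicate below how to pass from that case to ours, and if Theorem 4.1 already covers the unbounded case the reduction is superfluous).

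Assume then that Theorem 4.1 is available for open surjections whose fibres have at most $k$ points, for each fixed $k$. For $k\in\N$ put $Y_k=\{y\in Y\mid \#f^{-1}(y)\le k\}$ and $X_k=f^{-1}(Y_k)$. I would first check that $Y_k$ is closed: if $\#f^{-1}(y)\ge k+1$, choose distinct $x_1,\dots,x_{k+1}\in f^{-1}(y)$ and pairwise disjoint open neighbourhoods $U_i$ of $x_i$; then $V=\bigcap_{i=1}^{k+1}f(U_i)$ is an open neighbourhood of $y$ (as $f$ is open), and every $y'\in V$ meets each $U_i$ in its fibre, so $\#f^{-1}(y')\ge k+1$. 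Hence $Y\setminus Y_k$ is open, $Y_k$ is closed, and so is $X_k$. Since all fibres are finite, $Y=\bigcup_k Y_k$ and $X=\bigcup_k X_k$. Because $X_k=f^{-1}(Y_k)$ is saturated, for any $U=U'\cap X_k$ with $U'$ open in $X$ one has $f(U)=f(U')\cap Y_k$, which is open in $Y_k$; hence $f_k:=f|_{X_k}\colon X_k\to Y_k$ is an open surjection whose fibres have at most $k$ points.

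Applying Theorem 4.1 to each $f_k$ gives $\dim X_k=\dim Y_k$. By Fact \ref{fact:dim-subspace}, $\dim X_k=\dim Y_k\le\dim Y$ and $\dim Y_k=\dim X_k\le\dim X$ for every $k$, and since $\{X_k\}_{k\in\N}$ and $\{Y_k\}_{k\in\N}$ are countable closed covers of $X$ and $Y$ respectively, Fact \ref{fact:countable-sum-thm} yields $\dim X\le\dim Y$ and $\dim Y\le\dim X$, hence $\dim X=\dim Y$. The main obstacle I foresee is purely expository: pinning down the exact statement of Nagami's result and verifying that its hypotheses are met here (open map, surjection, metric spaces, finite fibres); the remaining steps are routine applications of the subspace monotonicity and countable closed sum theorems recalled above.
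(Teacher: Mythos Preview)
Your proposal is correct and matches the paper's approach: the paper does not give an independent proof of this fact but simply records it as a direct consequence of \cite[Theorem~4.1]{nagami:60}, exactly as you do. Your additional reduction from unbounded to uniformly bounded fibre cardinalities via the closed sets $Y_k$ and the countable sum theorem is sound (and in fact unnecessary for the paper's only application in Lemma~\ref{lem:dimRDelta/sim}, where all fibres have at most $n!$ points).
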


\section{Dimension of Gromov-Hausdorff spaces and box metric spaces}\label{sec:dimensions}

For a set $A$, let $\card{A}$ denote its cardinality. 
For $k \in \mathbb{N}$ and for sets $A$ and $B$, let $[B]^k$ and $B^A$ denote the set of all subsets of $B$ with precisely $k$ points and that of all maps from $A$ to $B$, respectively. 
For $f \in B^A$, let $\graph f$ denote the graph $\{ (x,f(x)) \in A\times B \mid  x \in A\}$ of $f$.

For a metric space $X$, define its {\em separation} (see, for example, \cite{memoli:12}) as the value
$$\sep X=\min_{\{x,x'\} \in [X]^2}d_X(x,x^\prime).$$

\begin{lemma}
\label{lem:relation-bijection2}
Let $[X], [Y] \in  \Xfin$ and $\pi \in \Pi(\mu_X,\mu_{Y})$.
Suppose that $S \subset X \times Y$ satisfy $\dis S<\sep X$ and one of the following conditions holds:
\begin{enumerate}[{\rm (a)}]
\item $S$ is a correspondence, 
\item $\pi (S) > 1-\min_{z \in X} \mu_X(\{z\})$.
\end{enumerate}
Then, there exists an injection $f \colon X \to Y$ such that $\graph f\subseteq S$.
Moreover, if $\card{X}=\card{Y}$, then $f$ is bijective and $S=\graph f$.
\end{lemma}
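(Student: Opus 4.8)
The plan is to produce the injection $f$ by showing that for every $x \in X$ the "fiber" $S \cap (\{x\} \times Y)$ is a singleton, and then to check that distinct points of $X$ receive distinct images. Observe first that in both cases (a) and (b) the set $S$ projects onto all of $X$: in case (a) this is immediate from the definition of correspondence, and in case (b), if some $x_0 \in X$ had $S \cap (\{x_0\}\times Y) = \emptyset$, then $S \subseteq (X\setminus\{x_0\})\times Y$, so $\pi(S) \leq \mu_X(X\setminus\{x_0\}) = 1 - \mu_X(\{x_0\}) \leq 1 - \min_{z\in X}\mu_X(\{z\})$, contradicting the hypothesis on $\pi(S)$. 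So for each $x$ we may pick at least one $y$ with $(x,y) \in S$; the point is that it is unique. Suppose $(x,y), (x,y') \in S$ with $y \neq y'$. Applying the distortion bound to the pair $((x,y),(x,y'))$ gives $|d_X(x,x) - d_Y(y,y')| = d_Y(y,y') \leq \dis S < \sep X \leq \sep Y$ — wait, we do not have a bound on $\sep Y$ directly, but we do have $d_Y(y,y') \geq \sep Y$ is not what we need; rather $d_Y(y,y') < \sep X$ and $d_Y(y,y')>0$ gives no contradiction with $\sep X$ alone. The correct observation is different: $d_Y(y,y') \leq \dis S < \sep X$ is not yet absurd. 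I should instead argue the injectivity and single-valuedness together, as follows.

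The clean approach: for each $x \in X$ choose $f(x) \in Y$ with $(x,f(x)) \in S$ (possible by the projection argument above). I claim $f$ is injective, which by finiteness and the cardinality hypothesis will also force single-valuedness retroactively. Suppose $x_1 \neq x_2$ in $X$ but $f(x_1) = f(x_2) =: y$. Then $(x_1,y),(x_2,y) \in S$, and the distortion estimate on this pair yields $|d_X(x_1,x_2) - d_Y(y,y)| = d_X(x_1,x_2) \leq \dis S < \sep X$, contradicting $d_X(x_1,x_2) \geq \sep X$. Hence $f$ is an injection with $\graph f \subseteq S$, proving the first assertion. Now suppose $\card X = \card Y =: n$. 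Since $f$ is an injection between $n$-element sets, it is a bijection, so $\graph f$ already has $n$ elements and meets every column $\{x\}\times Y$; it remains to show $S = \graph f$, i.e. $S$ contains no extra point. If $(x,y) \in S$ with $y \neq f(x)$, then $(x,y),(x,f(x)) \in S$ gives $d_Y(y,f(x)) \leq \dis S < \sep X \leq \sep Y$ (here we use $\sep X = \sep Y$, which holds because $f$ is an isometry-free bijection — actually we do not know $\sep X = \sep Y$), so instead I argue: $y, f(x)$ are two distinct points of $Y$, and there is some $x' \in X$ (necessarily $x' \neq x$) with $f(x') = y$ since $f$ is onto; then $(x',y),(x,y)\in S$ forces $d_X(x,x') \leq \dis S < \sep X$, again contradicting $d_X(x,x')\geq \sep X$. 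Therefore no such $(x,y)$ exists and $S = \graph f$.

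The only genuine subtlety — and the step I would flag as requiring care — is the very first one: guaranteeing that every column $\{x\}\times Y$ meets $S$, since without it $f$ is not even defined on all of $X$. In case (a) this is trivial, but in case (b) it rests on the mass argument using $\pi(S) > 1 - \min_z \mu_X(\{z\})$, and one must be careful that the complement estimate $\pi\big((X\setminus\{x_0\})\times Y\big) = \mu_X(X\setminus\{x_0\})$ uses precisely that $\pi$ has first marginal $\mu_X$. Everything else is a direct application of the distortion inequality $\dis S < \sep X$ to pairs sharing a coordinate, combined with the pigeonhole principle for the cardinality statement. I do not anticipate needing any comparison between $\sep X$ and $\sep Y$; all contradictions are driven by the separation of $X$ alone, with $Y$-distances always appearing as the smaller side of the distortion bound.
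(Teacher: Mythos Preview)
Your proof is correct and follows essentially the same route as the paper's: show every column $\{x\}\times Y$ meets $S$ (trivial in case (a), and in case (b) via the marginal estimate $\pi(S)\le\pi((X\setminus\{x_0\})\times Y)=1-\mu_X(\{x_0\})$), choose $f(x)$ in that column, derive injectivity from $\dis S<\sep X$ applied to pairs sharing a $Y$-coordinate, and then use $\card X=\card Y$ to upgrade to a bijection and force $S=\graph f$. The paper phrases the last step as a pigeonhole on the disjoint nonempty fibers $S_x\subseteq Y$ rather than invoking surjectivity of $f$ to locate $x'=f^{-1}(y)$, but this is the same observation; your false starts involving $\sep Y$ can simply be deleted, since as you correctly note in the end the argument never needs it.
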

\begin{proof}
For each $x \in X$, set $S_x =\{ y \in Y \mid (x,y)\in S\}$.
First, we show that $S_x \ne \emptyset$ for every $x \in X$.
If $S$ is a correspondence, then the claim obviously holds.
Suppose that $\pi (S) > 1-\min_{z \in X} \mu_X(\{z\})$ and let $x \in X$.
Since $\pi \in \Pi(\mu_X,\mu_Y)$, we have $\pi (\{x\}\times Y)= \mu_X(\{x\})$, and hence
\begin{align*}
\mu_X (\{x\}) &\geq \min_{z \in X} \mu_X(\{z\})
\\ &>1-\pi (S)= \pi ( (X\times Y) \setminus S ) 
 \geq \pi ( (\{x\}\times Y) \setminus (\{x\}\times S_x) ) 
\\ &=\pi (\{x\}\times Y)- \pi(\{x\}\times S_x) 
=\mu_{X}(\{x\}) - \pi (\{x\}\times S_x),
\end{align*}
which implies $\pi (\{x\}\times S_x)>0$, and thus $S_x\ne \emptyset$.

Then there exists a map $f \colon X \to Y$ such that $f (x) \in S_x$ for every $x \in X$.
For each $y \in Y$, since $\dis S < \sep X$, we have $\card{\{x \in X \mid (x,y)\in S\}}\leq 1$.
Hence, if $x,x' \in X$ and $x\ne x'$, then $S_x \cap S_{x'} = \emptyset$,
and thus $f$ is an injection. 
Moreover, if  $\card{X}=\card{Y}$, then $f$ is bijective and $S_x=\{f(x)\}$ for every $x \in X$, and thus $S=\graph f$.
\end{proof}

A subset $A$ of a topological space $X$ is said to be an {\em $F_\sigma$-set} of $X$ if $A$ is a countable union of closed subsets of $X$.

\begin{fact} \label{fact:GHX-closed-Fsigma}
Let $n \in \mathbb{N}$.
\begin{enumerate}[{\rm (1)}]
\item $\GHleq{n}$ is closed in $\GHfin$ and $\GHeq{n}$ is an $F_\sigma$-set of $\GHfin$.
\item $\Xleq{n}$ is closed in $\Xfin$ and $\Xeq{n}$ is an $F_\sigma$-set of $\Xfin$.
\end{enumerate}
\end{fact}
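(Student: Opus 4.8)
The plan is to show all four assertions by exhibiting the relevant sets as suitable limits of subsets on which the number of points is controlled, using the Gromov--Hausdorff and box distances between spaces of distinct cardinalities as the key quantitative tool. The two halves of the statement (the Gromov--Hausdorff case and the box case) are parallel, so I would carry out the argument for one and then indicate the modifications needed for the other.

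First I would prove that $\GHleq{n}$ is closed in $\GHfin$. The natural approach is: if $[X_k]\to[X]$ in $\GHfin$ with each $\card{X_k}\le n$, then $\card{X}\le n$. To see this, suppose toward a contradiction that $\card X\ge n+1$ and pick $n+1$ distinct points of $X$; let $\delta=\frac12\sep(X)>0$ restricted to those points. For $k$ large, $\GH(X_k,X)<\delta/2$, so there is a correspondence $R$ with $\dis R<\delta$; applying (the argument behind) Lemma \ref{lem:relation-bijection2}, each of the $n+1$ chosen points of $X$ is matched, via $R$, to a point of $X_k$, and two distinct chosen points cannot be matched to the same point of $X_k$ because the distortion bound forbids it. This produces $n+1$ distinct points in $X_k$, contradicting $\card{X_k}\le n$. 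Hence $\GHleq{n}$ is closed. The same template, using the characterisation \eqref{eq:box} and condition (b) of Lemma \ref{lem:relation-bijection2} with $\delta=\frac12\sep(X)$ and the threshold $1-\min_{z}\mu_X(\{z\})$, shows that $\Xleq{n}$ is closed in $\Xfin$: if $[X_k]\to[X]$ with $\card{\supp\mu_{X_k}}\le n$, the minimiser $(\pi,S)$ for $\square(X_k,X)$ eventually satisfies $\dis S<\sep(X)$ and $\pi(S)>1-\min_z\mu_X(\{z\})$, so Lemma \ref{lem:relation-bijection2} yields an injection $\supp\mu_X\hookrightarrow\supp\mu_{X_k}$, forcing $\card{\supp\mu_X}\le n$.

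For the $F_\sigma$ assertions, I would write $\GHeq{n}=\GHleq{n}\setminus\GHleq{n-1}$ (with $\GHleq{0}=\emptyset$), so $\GHeq{n}$ is the difference of two closed sets, i.e.\ the intersection of a closed set with an open set; in a metric space every open set is $F_\sigma$, hence so is this intersection. The same reasoning gives $\Xeq{n}=\Xleq{n}\setminus\Xleq{n-1}$ as an $F_\sigma$-set of $\Xfin$. This step is routine once the closedness in part (1)/(2) is in hand.

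The main obstacle is the closedness argument, specifically making rigorous the passage ``a correspondence (resp.\ near-optimal $(\pi,S)$) with small distortion injects the points of the limit space into the approximating space.'' Lemma \ref{lem:relation-bijection2} is stated for a single pair with $\dis S<\sep X$, so the care needed is: (i) ensuring that, for $k$ large, the optimal relation/plan realising $\GH$ or $\square$ does have distortion below $\sep X$ — for $\GH$ this is immediate since $\dis R\le 2\GH(X_k,X)+\varepsilon$ can be made $<\sep X$; for $\square$ one must invoke that the minimiser in \eqref{eq:box} exists and that $\max\{1-\pi(S),\dis S\}=\square(X_k,X)\to 0$, which forces both $\dis S<\sep X$ and $\pi(S)>1-\min_z\mu_X(\{z\})$ eventually (using $\min_z\mu_X(\{z\})>0$ since $X$ is finite); and (ii) checking that the injection runs in the right direction — from $X$ (resp.\ $\supp\mu_X$) into $X_k$ (resp.\ $\supp\mu_{X_k}$) — which is exactly what Lemma \ref{lem:relation-bijection2} provides after symmetrising the roles of $X$ and $Y$. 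Everything else is bookkeeping.
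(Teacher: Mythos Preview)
Your proposal is correct and follows essentially the same route as the paper: the paper proves the complement of $\Xleq{n}$ is open by showing that for $[X]\in\Xfin\setminus\Xleq{n}$ the ball $B_\square([X],\delta)$ with $\delta=\min\{\sep X,\min_z\mu_X(\{z\})\}$ misses $\Xleq{n}$, invoking Lemma~\ref{lem:relation-bijection2} to produce an injection $X\hookrightarrow Y$ exactly as you describe, and then derives the $F_\sigma$ claim from $\Xeq{n}=\Xleq{n}\cap(\Xfin\setminus\Xleq{n-1})$ in the same way. Your sequential formulation and the restriction to $n+1$ chosen points are cosmetic variants; no symmetrisation of Lemma~\ref{lem:relation-bijection2} is needed, since it already yields the injection in the direction you want.
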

\begin{proof}
We give a proof of (2) for the sake of completeness. Item (1) can be proved similarly.

Let $[X] \in \Xfin \setminus \Xleq{n}$ and put 
\begin{align*}
\delta = \min\left\{ \sep X,\, \min_{z \in X} \mu_X (\{z\})\right \} >0.
\end{align*}
To show that 
$B_\square ([X],\delta) \cap \Xleq{n}=\emptyset$, let $[Y] \in B_\square ([X],\delta) \cap \Xfin$.
Since $\square ([X],[Y]) <\delta$, there exist $\pi \in \Pi(\mu_X,\mu_Y)$ and $S \subset X\times Y$ such that $\max\{ 1-\pi(S),\dis S\} < \delta$.
By Lemma \ref{lem:relation-bijection2}, there exists an injection $f \colon X \to Y$, and hence $\card{Y}\geq \card{X}>n$. Therefore $[Y]\notin \Xleq{n}$. 
Thus $B_\square ([X],\delta) \cap \Xleq{n}=\emptyset$, and $\GHleq{n}$ is closed in $\GHfin$.

Note that $\Xeq{n}=\Xleq{n} \setminus \Xleq{n-1}=\Xleq{n}\cap (\Xfin \setminus \Xleq{n-1})$.
Since $\Xfin \setminus \Xleq{n-1}$ open in the metric space $\Xfin$, it is an $F_\sigma$-set of $\Xfin$ (see \cite[Corollary 4.1.12]{engelking:89}).
Thus  $\Xeq{n}$ is an $F_\sigma$-set of $\Xfin$.
\end{proof}

\begin{lemma}
\label{lem:bijection-coupling-measure2}
Let $[X], [Y] \in \Xeq{n}$, $\pi \in \Pi(\mu_X,\mu_{Y})$ and $\delta>0$.
Suppose that there exists a bijection $f \colon X \to Y$ satisfying  $1-\pi (\graph f) <\delta$.
Then, $$\max_{x \in X} |\mu_X(\{x\}) -\mu_{Y} (\{f (x)\}) | <\delta.$$
\end{lemma}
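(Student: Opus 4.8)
The plan is to exploit the marginal conditions on $\pi$ together with the fact that, $f$ being a bijection, the graph $\graph f$ meets every ``row'' $\{x\}\times Y$ and every ``column'' $X\times\{f(x)\}$ in exactly the single point $(x,f(x))$. First I would fix $x\in X$ and use $\pi\in\Pi(\mu_X,\mu_Y)$ to write
\[
\mu_X(\{x\})=\pi(\{x\}\times Y),\qquad \mu_Y(\{f(x)\})=\pi(X\times\{f(x)\}).
\]
Splitting each of the sets $\{x\}\times Y$ and $X\times\{f(x)\}$ into its intersection with $\graph f$ and the complementary part, and noting that both intersections equal $\{(x,f(x))\}$, additivity of $\pi$ gives
\[
\mu_X(\{x\})=\pi(\{(x,f(x))\})+\pi\big((\{x\}\times Y)\setminus\graph f\big)
\]
and the analogous identity with $\mu_Y(\{f(x)\})$ and $X\times\{f(x)\}$ in place of $\mu_X(\{x\})$ and $\{x\}\times Y$.

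Subtracting the two identities cancels the common term $\pi(\{(x,f(x))\})$, leaving
\[
\mu_X(\{x\})-\mu_Y(\{f(x)\})=\pi\big((\{x\}\times Y)\setminus\graph f\big)-\pi\big((X\times\{f(x)\})\setminus\graph f\big).
\]
Both summands on the right are nonnegative, so the absolute value of the left-hand side is at most the larger of the two; and each of them is bounded above by $\pi\big((X\times Y)\setminus\graph f\big)=1-\pi(\graph f)<\delta$, because both $(\{x\}\times Y)\setminus\graph f$ and $(X\times\{f(x)\})\setminus\graph f$ are contained in $(X\times Y)\setminus\graph f$. Hence $|\mu_X(\{x\})-\mu_Y(\{f(x)\})|<\delta$ for every $x\in X$, and since $X$ is finite the maximum over $x\in X$ is attained and stays strictly below $\delta$.

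There is essentially no difficult step here: the argument is a bookkeeping computation with the coupling $\pi$, using only its finite additivity and monotonicity. The one point that deserves attention is the identity $(\{x\}\times Y)\cap\graph f=\{(x,f(x))\}=(X\times\{f(x)\})\cap\graph f$: surjectivity of $f$ is what makes $\graph f$ meet the column through $(x,f(x))$, and injectivity is what keeps that intersection a single point, so the full bijectivity hypothesis is exactly what is used.
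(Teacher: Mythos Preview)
Your proof is correct and follows essentially the same approach as the paper's: use the marginal identities $\mu_X(\{x\})=\pi(\{x\}\times Y)$ and $\mu_Y(\{f(x)\})=\pi(X\times\{f(x)\})$, note that $\graph f$ meets each row and each column in the single point $(x,f(x))$, and bound the residual mass by $1-\pi(\graph f)<\delta$. (One small quibble with your closing remark: surjectivity is not actually what makes $\graph f$ meet the column $X\times\{f(x)\}$, since $(x,f(x))$ lies there automatically; only injectivity is needed to keep that intersection a singleton.)
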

\begin{proof}
Let $x \in X$. 
Since $\graph f \cap (X\times \{f(x)\}) = \{(x,f(x))\}$, 
we have
\begin{align*}
\mu_{Y}(\{f (x)\}) -\mu_{X}(\{x\}) 
&= \pi (X\times \{f (x)\}) - \pi (\{x\}\times Y)
\\ 
& \leq \pi ((X \times \{f (x)\}) \setminus \{ (x,f (x))\}  )
\\ 
& =\pi ((X \times \{f (x)\})\setminus \graph \sigma  )
\\ 
& \leq \pi ((X \times Y) \setminus \graph \sigma) =1-\pi (\graph \sigma) <\delta .
\end{align*}
Similarly, $\mu_{X}(\{x\}) - \mu_{Y}(\{f (x)\})<\delta$.
\end{proof}

For each $k\in \N$, 
we regard $k$ as the set $\{0,1,\dots, k-1\}$. 
Let  
\begin{align*}
R_n&=\{r\in (0,\infty)^{[n]^2}\mid 
\text{for any $\{i,j,k\} \in [n]^3$, }r(\{i,k\})\le r(\{i,j\})+r(\{j,k\})\}.
\end{align*}
Note that $\card{[n]^2}=\frac{n(n-1)}{2}$.
Let $\sint{\Delta_{n-1}}$ denote the interior of the $(n-1)$-dimensional simplex, which is regarded as the set of all $s \in (0,1)^n$ such that $\sum_{i\in n} s (i)=1$. 
We assume that $R_n$, $\sint{\Delta_{n-1}}$
and $R_n \times \sint{\Delta_{n-1}}$ are equipped with the supremum metrics.

Let $\sym{n}$ be the symmetric group on $n$. 
Then each $\sigma \in \sym{n}$ induces the map $\sigma\colon [n]^2 \to [n]^2; A \mapsto \sigma(A)$.
Define equivalence relations $\sim_{GH}$ on $R_n$ and $\sim_b$ on $R_n\times \sint{\Delta_{n-1}}$ as follows: 
\begin{itemize}
\item For $r,r' \in R_n$, 
$r\sim_{GH} r'$ if and only if there exists $\sigma \in \sym{n} $ such that $r' =r\circ \sigma$.
\item For $(r,s),(r',s') \in R_n$, $(r,s)\sim_b (r',s')$ if and only if there exists $\sigma \in \sym{n}$ such that $r' =r\circ \sigma$ and $s' =s\circ \sigma$.
\end{itemize}
For $r \in R_n$ (resp., $(r,s) \in R_n \times \sint{\Delta_{n-1}}$), let $[r]_{\sim_{GH}}$ (resp., $[r,s]_{\sim_b}$) denote the equivalence class of $r$ (resp., $(r,s)$) with respect to $\sim_{GH}$ (resp., $\sim_b$).
Note that $\card{[r]_{\sim_{GH}}}\leq n!$ and $\card{[r,s]_{\sim_b}} \leq n!$ for every $(r,s) \in R_n \times \sint{\Delta_{n-1}}$.

Let $R_n/{\sim_{GH}}$ and $(R_n\times \sint{\Delta_{n-1}})/{\sim_b}$ be the quotient spaces and $q_{GH} \colon R_n \to R_n/{\sim_{GH}}$ and  $q_b \colon R_n\times \sint{\Delta_{n-1}} \to (R_n\times\sint{\Delta_{n-1}})/{\sim_b}$ the natural quotient maps.

For $r\in R_n$, let $d_r$ denote the metric on $n$ defined by
\begin{align*}
d_r(i,j) = 
\begin{cases}
0  & \text{if } i=j,\\
r(\{i,j\}) & \text{if } i\ne j
\end{cases}
\end{align*}
for $i,j \in n$.
For $s \in \sint{\Delta_{n-1}}$, let $\mu_s$ denote the probability measure on $n$ defined by $\mu_s(A) = \sum_{i \in A}s(i)$ for $A \subset n$.
Then $\supp\mu_s =n$.
Define $\Phi_{GH}\colon R_n\to \GHeq{n}$ and $\Phi_b \colon R_n \times \sint{\Delta_{n-1}} \to \Xeq{n}$ by
\begin{align*}
\Phi_{GH}(r)&=[n,d_r] ,\quad  r \in R_n,
\\
\Phi_b(r,s) &=[n,d_r,\mu_s], \quad (r,s) \in R_n \times \sint{\Delta_{n-1}}.
\end{align*}

\begin{lemma}
\label{lem:Phigb}
The maps $\Phi_{GH}$ and $\Phi_b$ are Lipschitz open surjections such that $\Phi_{GH}^{-1}(\{\Phi_{GH}(r)\})=[r]_{\sim_{GH}}$ and $\Phi_b^{-1}(\{\Phi_b(r,s)\}) =[r,s]_{\sim_b}$ for every $(r,s) \in R_n \times \sint{\Delta_{n-1}}$.
\end{lemma}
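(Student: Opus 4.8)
The plan is to verify the four assertions in turn---surjectivity, the shape of the fibers, the Lipschitz estimate, and openness---with the box case refining the Gromov--Hausdorff case at each step, the only extra work being the bookkeeping of the point masses. For surjectivity, any $[Y]\in\GHeq{n}$ equals $\Phi_{GH}(r)$ once we fix a bijection $g\colon n\to Y$ and set $r(\{i,j\})=d_Y(g(i),g(j))$: here $r\in(0,\infty)^{[n]^2}$ because the $n$ points of $Y$ are distinct, and the triangle inequality in $Y$ puts $r$ in $R_n$; for $\Phi_b$ one records in addition $s(i)=\mu_Y(\{g(i)\})\in\sint{\Delta_{n-1}}$, using that each point of $\supp\mu_Y$ has positive mass. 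For the fibers, an isometry between $(n,d_r)$ and $(n,d_{r'})$---both honest $n$-point spaces on the common underlying set $n$---is precisely a permutation $\sigma\in\sym{n}$, and the isometry condition $d_{r'}=d_r\circ(\sigma\times\sigma)$ reads exactly $r'=r\circ\sigma$ under the induced action of $\sigma$ on $[n]^2$, so $\Phi_{GH}^{-1}(\{\Phi_{GH}(r)\})=[r]_{\sim_{GH}}$; in the box case the additional requirement $\sigma_*\mu_s=\mu_{s'}$ unwinds to $s'=s\circ\sigma$, giving $\Phi_b^{-1}(\{\Phi_b(r,s)\})=[r,s]_{\sim_b}$.

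For the Lipschitz property, testing the Gromov--Hausdorff distance with the identity correspondence on $n$ yields $\GH(\Phi_{GH}(r),\Phi_{GH}(r'))\le\tfrac12\|r-r'\|_\infty$. For $\Phi_b$ I would plug into \eqref{eq:box} the diagonal $S=\{(i,i):i\in n\}$ together with a coupling $\pi$ of $\mu_s$ and $\mu_{s'}$ with $\pi(\{(i,i)\})=\min\{s(i),s'(i)\}$ (put mass greedily on the diagonal, then spread the residual row and column masses, whose totals coincide, off it); then $\dis S=\|r-r'\|_\infty$ and $1-\pi(S)=\sum_{i\in n}(s(i)-s'(i))^+\le\tfrac n2\|s-s'\|_\infty$, so \eqref{eq:box} gives a Lipschitz bound with constant $\max\{\tfrac n2,1\}$ for the supremum metric on $R_n\times\sint{\Delta_{n-1}}$.

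Openness is the crux, and it suffices to find, for each $r\in R_n$ and $\varepsilon>0$, a $\delta>0$ with $B_{\GH}(\Phi_{GH}(r),\delta)\cap\GHeq{n}\subseteq\Phi_{GH}(B(r,\varepsilon))$. I would take $\delta<\tfrac12\min\{\varepsilon,\sep(n,d_r)\}$: given $[Y]\in\GHeq{n}$ with $\GH([n,d_r],[Y])<\delta$, pick a correspondence $R$ between $n$ and $Y$ with $\dis R<2\delta<\sep(n,d_r)$; by Lemma \ref{lem:relation-bijection2}(a) (e.g.\ after equipping $n$ and $Y$ with uniform measures, so its hypotheses are literally met), $\card n=\card Y$ forces $R=\graph f$ for a bijection $f\colon n\to Y$, so $r'(\{i,j\})=d_Y(f(i),f(j))$ lies in $R_n$, has $\Phi_{GH}(r')=[Y]$, and satisfies $\|r-r'\|_\infty=\dis R<\varepsilon$. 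For $\Phi_b$, given $(r,s)$ and $\varepsilon>0$ I would take $\delta<\min\{\varepsilon,\sep(n,d_r),\min_{i\in n}\mu_s(\{i\})\}$; if $\square([n,d_r,\mu_s],[Y])<\delta$, then \eqref{eq:box} supplies $\pi\in\Pi(\mu_s,\mu_Y)$ and closed $S\subseteq n\times Y$ with $\dis S<\delta\le\sep(n,d_r)$ and $\pi(S)>1-\delta>1-\min_{i\in n}\mu_s(\{i\})$, so Lemma \ref{lem:relation-bijection2} again forces $S=\graph f$ for a bijection $f$; putting $r'(\{i,j\})=d_Y(f(i),f(j))$ and $s'(i)=\mu_Y(\{f(i)\})$ gives $\Phi_b(r',s')=[Y]$ with $\|r-r'\|_\infty=\dis S<\delta$, while Lemma \ref{lem:bijection-coupling-measure2} applied to $f$ gives $\|s-s'\|_\infty<\delta$, so $(r',s')\in B((r,s),\varepsilon)$.

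The step I expect to be the main obstacle is this last one, and within it the delicate point is the choice of $\delta$: it has to be small relative both to the separation $\sep(n,d_r)$ and, for $\Phi_b$, to the smallest point mass $\min_{i}\mu_s(\{i\})$, so that the rigidity provided by Lemmas \ref{lem:relation-bijection2} and \ref{lem:bijection-coupling-measure2} takes effect and the near-optimal correspondence or relation is forced to be the graph of a bijection. Once that is in place, everything else is a routine unwinding of the explicit formulas for $\GH$ and $\square$.
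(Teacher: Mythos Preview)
Your proof is correct and follows essentially the same route as the paper: surjectivity by fixing a bijection to the target space, identification of the fibers with the $\sym{n}$-orbits, the Lipschitz bound for $\Phi_{GH}$ via the diagonal correspondence, and openness by choosing $\delta$ below both $\sep(n,d_r)$ and $\min_i s(i)$ and then invoking Lemmas \ref{lem:relation-bijection2} and \ref{lem:bijection-coupling-measure2} to force the optimal relation to be the graph of a bijection. The one substantive difference is the Lipschitz estimate for $\Phi_b$: the paper simply cites \cite[Lemma~4.24]{shioya:16} to obtain the constant $3n$, whereas you argue directly from \eqref{eq:box} with an explicit diagonal-greedy coupling, which is self-contained and even yields the sharper constant $\max\{n/2,1\}$.
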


\begin{proof}
To show that $\Phi_b$ is surjective, let $[X] \in \Xeq{n}$. 
Take a bijection $f\colon n \to \supp \mu_X$ and define $r \in (0,\infty)^{[n]^2} $ and $s \in (0,1)^n $ by $r(\{i,j\})=d_X(f(i),f(j))$ for $\{i,j\}\in [n]^2$ and $s(i) =\mu_X(\{f(i)\})$ for $i \in n$. Then $(r,s) \in R_n \times \sint{\Delta_{n-1}}$ and $\Phi_b(r,s)=[X]$.
Similarly, $\Phi_{GH}$ is surjective.

It is easy to see that $\Phi_{GH}^{-1}(\{\Phi_{GH}(r)\})=[r]_{\sim_{GH}}$ and $\Phi_b^{-1}(\{\Phi_b(r,s)\}) =[r,s]_{\sim_b}$ for every $(r,s) \in R_n \times \sint{\Delta_{n-1}}$.

To show that $\Phi_{GH}$ is Lipschitz, let $r, r' \in R_n$.
Let $D =\{(i,i) \mid i \in n \}$. Then $D$ is a correspondence between $(n,d_{r})$ and $(n,d_{r'})$ and 
\begin{align*}
d_{GH} (\Psi (r), \Psi(r')) \leq  \frac{1}{2}\dis D 
&= \frac{1}{2} \sup_{i,j \in n} |d_r(i,j)-d_{r'}(i,j)| 
\\&=\frac{1}{2} \sup_{\{i,j\} \in [n]^2} |r(\{i,j\})-r'(\{i,j\})|=\frac{1}{2}d_{R_n}(r,r'),
\end{align*}
where $d_{R_n}$ is the supremum metric on $R_n$. 
Hence $\Phi_{GH}$ is $\frac{1}{2}$-Lipschitz.
It follows from \cite[Lemma 4.24]{shioya:16} that $\Phi_b$ is $3n$-Lipschitz.

To show that $\Phi_b$ is open, let $O$ be an open subset of $R_n \times \sint{\Delta_{n-1}}$ and $[X]\in \Phi_b(O)$.
Take $(r_X,s_X) \in O$ such that $\Phi (r_X,s_X)=[X]$.
Let $d_\infty$ be the supremum metric on $R_n\times \sint{\Delta_{n-1}}$ and take $\delta>0$ such that $B_{d_\infty}((r_X,s_X),\delta) \subset O$ and $$\delta < \min\{ \min_{\{i,j\}\in[n]^2} r_X(\{i,j\}), \min_{i \in n} s_X(i) \}.$$

To show $B_\square ([X],\delta ) \cap \Xeq{n} \subset \Phi_b(O)$, let $[Y] \in B_\square ([X],\delta )\cap \Xeq{n}$.
Since $\Phi_b$ is surjective, there exists $(r_Y,s_Y) \in R_n \times \sint{\Delta_{n-1}}$ such that $\Phi_b(r_Y,s_Y)=[Y]$.
Since 
\begin{align*}
\square ([n,d_{r_X},\mu_{s_X}], [n,d_{r_Y},\mu_{s_Y}])=\square (\Phi_b(r_X,s_X),\Phi_b(r_Y,s_Y)) =\square ([X], [Y])<\delta,
\end{align*}
there exists $\pi \in \Pi(\mu_{s_X}, \mu_{s_Y})$ and $S \subset n \times n$ such that $\max\{1-\pi (S) , \dis S \} <\delta$.
Then, 
\begin{align*}
&\dis S < \delta < \min_{\{i,j\}\in[n]^2} r_X(\{i,j\})=
\sep(X,d_{r_X})  
\text{ and } 
\\
&\pi(S) >1-\delta > 1-\min_{i \in n} s_X(i)= 1-\min_{i \in n} \mu_{s_X}(\{i\}).
\end{align*}
Hence, by Lemma \ref{lem:relation-bijection2} there exists a bijection $\sigma \colon n \to n$ such that $S=\graph \sigma$. 
Then $\sigma \in \sym{n}$ and 
\begin{align*}
\max_{\{i,j\} \in [n]^2}|r_X(\{i,j\})-r_Y(\sigma(\{i,j\}))| 
&=\max_{\{i,j\} \in [n]^2}|d_{r_X} (i,j) -d_{r_Y} (\sigma(i),\sigma(j))|
\\&= \dis \graph \sigma <\delta.
\end{align*}
Since $1-\pi(\graph \sigma)<\delta$, by Lemma \ref{lem:bijection-coupling-measure2} we have
\begin{align*}
\max_{i\in n} |s_X(i)-s_Y(\sigma(i)) | = \max_{i \in n} |\mu_{s_X}(\{i\}) - \mu_{s_Y}(\{\sigma(i)\})|<\delta.
\end{align*}
Therefore $d_\infty((r_X,s_X),(r_Y\circ \sigma,s_Y\circ \sigma))<\delta$, which implies 
$$(r_Y\circ \sigma,s_Y\circ \sigma) \in B_{d_\infty}((r_X,s_X),\delta ) \subset O.$$
Since $(r_Y\circ \sigma,s_Y\circ \sigma) \in [r_Y,s_Y]_{\sim_b}=\Phi_b^{-1}(\Phi_b(r_Y,s_Y))$, we have
\begin{align*}
[Y]=\Phi_b (r_Y,s_Y)=\Phi_b(r_Y\circ \sigma,s_Y\circ \sigma) \in \Phi_b (O).
\end{align*}
Hence $B_\square ([X],\delta ) \cap \Xeq{n} \subset \Phi_b(O)$.

Thus $\Phi_b (O)$ is open in $\Xeq{n}$, and $\Phi_b$ is open.

Similarly, we can show that $\Phi_{GH}$ is open.
\end{proof}

\begin{corollary}
\label{cor:GHnXn-homeo}
The spaces $\GHeq{n}$ and $\Xeq{n}$ are homeomorphic to $R_n/{\sim_{GH}}$ and $(R_n \times \sint{\Delta_{n-1}})/{\sim_b}$, respectively.
\end{corollary}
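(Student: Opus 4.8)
The plan is to read the corollary straight off Lemma \ref{lem:Phigb}, using the standard point-set fact that a continuous open surjection induces a homeomorphism from the quotient by its fibre equivalence relation onto its target. I would write the argument only for $\Phi_b$ and $\Xeq{n}$; the case of $\Phi_{GH}$ and $\GHeq{n}$ is identical, replacing $\Phi_b$, $q_b$, $\sim_b$ by $\Phi_{GH}$, $q_{GH}$, $\sim_{GH}$.

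First I would isolate the elementary fact: if $f\colon Z\to W$ is a continuous open surjection of topological spaces and $\approx$ is the equivalence relation on $Z$ given by $z\approx z'\iff f(z)=f(z')$, then the induced map $\bar f\colon Z/{\approx}\to W$ is a homeomorphism. Indeed $f$ factors as $f=\bar f\circ p$ through the canonical projection $p\colon Z\to Z/{\approx}$, the map $\bar f$ is a bijection by construction, it is continuous because $p$ is a quotient map, and it is open because for open $V\subseteq Z/{\approx}$ the set $p^{-1}(V)$ is open in $Z$, whence $\bar f(V)=f(p^{-1}(V))$ is open in $W$ by openness of $f$ and surjectivity of $p$.

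Then I would apply this. By Lemma \ref{lem:Phigb}, $\Phi_b\colon R_n\times\sint{\Delta_{n-1}}\to\Xeq{n}$ is a continuous (indeed Lipschitz) open surjection whose fibre through $(r,s)$ is $\Phi_b^{-1}(\{\Phi_b(r,s)\})=[r,s]_{\sim_b}$. Hence the equivalence relation on $R_n\times\sint{\Delta_{n-1}}$ determined by the fibres of $\Phi_b$ is exactly $\sim_b$, so the space $Z/{\approx}$ above is precisely $(R_n\times\sint{\Delta_{n-1}})/{\sim_b}$, with $p=q_b$. The fact then gives that the map $(R_n\times\sint{\Delta_{n-1}})/{\sim_b}\to\Xeq{n}$ induced by $\Phi_b$ is a homeomorphism, and symmetrically $R_n/{\sim_{GH}}\to\GHeq{n}$ induced by $\Phi_{GH}$ is a homeomorphism. (In particular these quotient spaces are metrizable, being homeomorphic to the separable metric spaces $\GHeq{n}$ and $\Xeq{n}$.)

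There is essentially no obstacle here: the substantive work — surjectivity, continuity, openness, and the identification of the fibres with the $\sim_b$- and $\sim_{GH}$-classes — was already done in Lemma \ref{lem:Phigb}, and what remains is bookkeeping. The only point deserving a moment's care is that one must genuinely use the \emph{openness} of $\Phi_b$ (not just that the induced map is a continuous bijection) to obtain continuity of the inverse.
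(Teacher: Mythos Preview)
Your proof is correct and follows essentially the same approach as the paper: the paper also invokes Lemma~\ref{lem:Phigb} to conclude that $\Phi_{GH}$ and $\Phi_b$ are quotient maps (citing \cite[Corollary~2.4.8]{engelking:89}) and then that the induced bijections are homeomorphisms (citing \cite[Proposition~2.4.3]{engelking:89}). You have simply unpacked these two cited facts into a self-contained paragraph, which is fine.
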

\begin{proof}
By Lemma \ref{lem:Phigb}, $\Phi_{GH}$ and $\Phi_b$ are quotient maps (see \cite[Corollary 2.4.8]{engelking:89}) and induce homeomorphisms $\overline{\Psi}_{GH} \colon R_n/{\sim_{GH}} \to \GHeq{n}$  and $\overline{\Phi}_b \colon (R_n \times \sint{\Delta_{n-1}})/{\sim_b} \to \Xeq{n}$, respectively (see \cite[Proposition 2.4.3]{engelking:89}).
\end{proof}

\begin{lemma}
\label{lem:dimRDelta}
$\dim R_n=\frac{n(n-1)}{2}$ and $\dim (R_n\times \sint{\Delta_{n-1}})=\frac{(n+2)(n-1)}{2}$.
\end{lemma}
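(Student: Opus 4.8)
The plan is to identify $R_n$ and $R_n\times\sint{\Delta_{n-1}}$ with subspaces of Euclidean spaces of the expected dimension, bounding $\dim$ from above via Facts \ref{fact:dim-subspace} and \ref{fact:dim-Rn-In}, and from below by exhibiting a cube of the same dimension sitting inside each of them.

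For the upper bound on $\dim R_n$: by definition $R_n$ is a subspace of $(0,\infty)^{[n]^2}$ with the supremum metric, which is an open subset of $\R^{[n]^2}$, and $\card{[n]^2}=\frac{n(n-1)}{2}$; hence $\dim R_n\le\dim\R^{n(n-1)/2}=\frac{n(n-1)}{2}$. For the reverse inequality I would check that $R_n$ has nonempty interior in $\R^{[n]^2}$: at the constant function $r_0\equiv 1$ all triangle inequalities are strict, and more precisely, if $r\in(0,\infty)^{[n]^2}$ satisfies $\sup_{A\in[n]^2}|r(A)-1|<1/3$, then $r(A)>2/3>0$ for all $A$, and for every $\{i,j,k\}\in[n]^3$ one has $r(\{i,k\})<4/3<r(\{i,j\})+r(\{j,k\})$, so that $r\in R_n$. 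Thus $R_n$ contains an open ball of $\R^{[n]^2}$, hence a topological copy of $[0,1]^{n(n-1)/2}$, and so $\dim R_n\ge\frac{n(n-1)}{2}$ by Facts \ref{fact:dim-Rn-In} and \ref{fact:dim-subspace}. Together these give $\dim R_n=\frac{n(n-1)}{2}$.

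The same scheme handles $\sint{\Delta_{n-1}}$ and the product. The set $\sint{\Delta_{n-1}}$ is a nonempty (it contains $(1/n,\dots,1/n)$) open convex subset of the affine hyperplane $H=\{s\in\R^n\mid\sum_{i\in n}s(i)=1\}$, which with the restricted supremum metric is homeomorphic to $\R^{n-1}$; arguing exactly as above, $\dim\sint{\Delta_{n-1}}=n-1$. For the product, I would first record the arithmetic identity $\frac{n(n-1)}{2}+(n-1)=\frac{(n+2)(n-1)}{2}$. On one side, $R_n\times\sint{\Delta_{n-1}}$ (with the supremum metric) is a subspace of $\R^{[n]^2}\times H$, which is homeomorphic to $\R^{(n+2)(n-1)/2}$, so $\dim(R_n\times\sint{\Delta_{n-1}})\le\frac{(n+2)(n-1)}{2}$. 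On the other side, by the previous arguments $R_n\times\sint{\Delta_{n-1}}$ contains a topological copy of $[0,1]^{n(n-1)/2}\times[0,1]^{n-1}=[0,1]^{(n+2)(n-1)/2}$, so $\dim(R_n\times\sint{\Delta_{n-1}})\ge\frac{(n+2)(n-1)}{2}$, and equality holds.

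I do not expect a genuine obstacle here; the only points requiring a little care are the verification that $R_n$ has nonempty interior in $\R^{[n]^2}$, which is the strict-triangle-inequality computation at $r_0$ above, and the standard identification of $\sint{\Delta_{n-1}}$ with an $(n-1)$-dimensional Euclidean space so that Facts \ref{fact:dim-Rn-In} and \ref{fact:dim-subspace} apply. Note that this route avoids any appeal to a product theorem for covering dimension.
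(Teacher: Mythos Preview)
Your proposal is correct and follows essentially the same sandwich argument as the paper: bound $\dim$ above by embedding into a Euclidean space of the right dimension and below by exhibiting an embedded cube of that dimension, using Facts \ref{fact:dim-subspace} and \ref{fact:dim-Rn-In}. The only cosmetic difference is that, for the lower bound on $\dim R_n$, the paper simply notes $[1,2]^{[n]^2}\subset R_n$ (values in $[1,2]$ trivially satisfy every triangle inequality), whereas you verify that the open ball of radius $1/3$ about the constant function $1$ lies in $R_n$; both produce a full-dimensional cube inside $R_n$, and the rest of your argument for the product via the affine hyperplane $H\cong\R^{n-1}$ matches the paper's reasoning.
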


\begin{proof}
Since $[1,2]^{[n]^2} \subset R_n\subset \R^{[n]^2}$ and $\card{[n]^2}=\frac{n(n-1)}{2}$, by Facts \ref{fact:dim-subspace} and \ref{fact:dim-Rn-In} we have
\begin{align*}
\frac{n(n-1)}{2} =\dim [1,2]^{[n]^2}\leq \dim R_n \leq \R^{[n]^2}=\frac{n(n-1)}{2},
\end{align*} 
and hence  $\dim R_n=\frac{n(n-1)}{2}$.
Similarly, since $[0,1]^{\frac{(n+2)(n-1)}{2}}$ (homeomorphic to $[1,2]^{[n]^2}\times [0,1]^{n-1}$) can be topologically embedded into $R_n \times \sint{\Delta_{n-1}}$ 
and $R_n \times \sint{\Delta_{n-1}}$ can be topologically embedded into $\R^{\frac{(n+2)(n-1)}{2}}$ (homeomorphic to $\R^{[n]^2}\times \R^{n-1}$),
we have $\dim (R_n\times \sint{\Delta_{n-1}})=\frac{(n+2)(n-1)}{2}$.
\end{proof}

\begin{lemma}
\label{lem:dimRDelta/sim}
$\dim R_n/{\sim_{GH}}=\frac{n(n-1)}{2}$ and 
$\dim (R_n\times \sint{\Delta_{n-1}})/{\sim_b}=\frac{(n+2)(n-1)}{2}$.
\end{lemma}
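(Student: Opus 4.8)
The plan is to recognise both $\sim_{GH}$ and $\sim_b$ as orbit equivalence relations of actions of the finite symmetric group $\sym{n}$ acting by homeomorphisms, deduce from this that $q_{GH}$ and $q_b$ are open surjections with finite fibres, and then invoke Fact \ref{fact:nagami} together with Lemma \ref{lem:dimRDelta}.

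First I would observe that $\sim_{GH}$ is the orbit equivalence relation of the action of $\sym{n}$ on $R_n$ given by $\sigma\cdot r=r\circ\sigma^{-1}$, using the induced map $\sigma\colon[n]^2\to[n]^2$; since each $\sigma$ merely permutes the coordinates of $(0,\infty)^{[n]^2}$, it acts as an isometry of $(R_n,d_{R_n})$, in particular as a homeomorphism, and the orbit of $r$ is exactly $[r]_{\sim_{GH}}$. Thus $R_n/{\sim_{GH}}$ is the orbit space of this action and $q_{GH}$ is the orbit map. Likewise, $\sim_b$ is the orbit relation of the diagonal action $\sigma\cdot(r,s)=(r\circ\sigma^{-1},s\circ\sigma^{-1})$ of $\sym{n}$ on $R_n\times\sint{\Delta_{n-1}}$ by homeomorphisms, whose orbits are the classes $[r,s]_{\sim_b}$, so $q_b$ is the corresponding orbit map.

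Next I would verify the hypotheses of Fact \ref{fact:nagami} for $q_{GH}$ and $q_b$. Both are continuous surjections by construction, and each fibre, being an orbit, has cardinality at most $\card{\sym{n}}=n!<\infty$. They are open: for a quotient map $q\colon Z\to Z/G$ by a group $G$ acting by homeomorphisms and any open $U\subseteq Z$, one has $q^{-1}(q(U))=\bigcup_{g\in G}gU$, which is open, whence $q(U)$ is open. Finally, the quotient spaces are metrizable, since by Corollary \ref{cor:GHnXn-homeo} they are homeomorphic to the metric spaces $\GHeq{n}$ and $\Xeq{n}$ (alternatively, $d'([x],[y])=\min_{\sigma\in\sym{n}}d(x,\sigma\cdot y)$ is a compatible metric on the orbit space of a finite group acting by isometries).

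With these points established, Fact \ref{fact:nagami} yields $\dim R_n/{\sim_{GH}}=\dim R_n$ and $\dim(R_n\times\sint{\Delta_{n-1}})/{\sim_b}=\dim(R_n\times\sint{\Delta_{n-1}})$, and Lemma \ref{lem:dimRDelta} then gives the asserted values $\frac{n(n-1)}{2}$ and $\frac{(n+2)(n-1)}{2}$. I do not anticipate a substantial obstacle: the only steps requiring care are the openness of the quotient maps—which is precisely why it pays to phrase the equivalence relations as orbit relations of the $\sym{n}$-action—and checking that Fact \ref{fact:nagami} applies in the metric category, which is supplied by Corollary \ref{cor:GHnXn-homeo}.
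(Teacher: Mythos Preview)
Your proposal is correct and follows essentially the same approach as the paper: recognise $\sim_{GH}$ and $\sim_b$ as orbit relations of the $\sym{n}$-action by isometries, deduce that $q_{GH}$ and $q_b$ are open surjections with fibres of size at most $n!$, and then apply Fact~\ref{fact:nagami} together with Lemma~\ref{lem:dimRDelta}. The only added detail in your write-up is the explicit check that the quotient is metrizable (via Corollary~\ref{cor:GHnXn-homeo} or the orbit-space metric), which the paper leaves implicit.
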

\begin{proof}
For each $\sigma \in \sym{n}$, the self-maps $R_n\to R_n; r \mapsto r \circ \sigma$ and 
$R_n \times \sint{\Delta_{n-1}}; (r,s)\mapsto (r\circ \sigma , s \circ \sigma)$ are isometries, and we see that the quotient maps $q_{GH}\colon R_n \to R_n/{\sim_{GH}}$ and $q_b\colon  R_n\times \sint{\Delta_{n-1}}\to (R_n\times \sint{\Delta_{n-1}})/{\sim_b}$ are open. 
Note that each fiber of $q_{GH}$ and $q_b$ contains at most $n!$-many points.
Thus, according to Fact \ref{fact:nagami} and Lemma \ref{lem:dimRDelta},
we obtain the conclusion.
\end{proof}

By Corollary \ref{cor:GHnXn-homeo} and Lemma \ref{lem:dimRDelta/sim}, we obtain 
\begin{theorem}\label{thm:dimGHnXn}
$\dim \GHeq{n} = \frac{n(n-1)}{2}$ and $\dim \Xeq{n}=\frac{(n+2)(n-1)}{2}$.
\end{theorem}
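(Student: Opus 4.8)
The plan is simply to chain together the structural results established just above; the theorem is the clean payoff of that development, so I will lay out the three-step deduction rather than redo any work. First, I would invoke Corollary~\ref{cor:GHnXn-homeo} to replace the somewhat abstract spaces $\GHeq{n}$ and $\Xeq{n}$ by the concrete quotient spaces $R_n/{\sim_{GH}}$ and $(R_n\times\sint{\Delta_{n-1}})/{\sim_b}$, to which they are homeomorphic. Second, I would use Fact~\ref{fact:dim-subspace} — covering dimension is a homeomorphism invariant for metric spaces — to transfer the dimension computation onto these quotients. Third, I would quote Lemma~\ref{lem:dimRDelta/sim}, which evaluates $\dim R_n/{\sim_{GH}}=\tfrac{n(n-1)}{2}$ and $\dim(R_n\times\sint{\Delta_{n-1}})/{\sim_b}=\tfrac{(n+2)(n-1)}{2}$. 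Assembling the three steps yields exactly $\dim\GHeq{n}=\tfrac{n(n-1)}{2}$ and $\dim\Xeq{n}=\tfrac{(n+2)(n-1)}{2}$.

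Since every ingredient is already in hand, there is no genuine obstacle at this final stage: the proof is a one-line composition of a homeomorphism identification with a dimension count. It is worth recording, though, where the actual difficulty sat upstream, in case a reader wants to gauge what is being packaged here. The crux is Lemma~\ref{lem:Phigb}, that the parametrisation maps $\Phi_{GH}$ and $\Phi_b$ are open surjections with fibers of cardinality at most $n!$; among these properties, openness is the delicate point, and it rests on Lemma~\ref{lem:relation-bijection2}, which forces any relation between two $n$-point spaces whose distortion falls below the separation of the source (and which is either a correspondence or carries enough coupling mass) to be the graph of a bijection. From there, the quotients $R_n/{\sim_{GH}}$ and $(R_n\times\sint{\Delta_{n-1}})/{\sim_b}$ inherit the dimensions of $R_n$ and $R_n\times\sint{\Delta_{n-1}}$ through Nagami's theorem (Fact~\ref{fact:nagami}) applied to the open finite-to-one quotient maps $q_{GH}$ and $q_b$, and those dimensions are computed in Lemma~\ref{lem:dimRDelta} by sandwiching $R_n$ between a Euclidean cube and $\R^{[n]^2}$ (and similarly for the product with the open simplex). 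The present theorem is then exactly the statement that closes this loop.
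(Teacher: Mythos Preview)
Your proposal is correct and follows exactly the paper's own approach: the paper simply states that the theorem follows from Corollary~\ref{cor:GHnXn-homeo} and Lemma~\ref{lem:dimRDelta/sim}, which is precisely your three-step deduction (with Fact~\ref{fact:dim-subspace} made explicit). Your second paragraph accurately summarizes the upstream work and adds helpful context, but is not part of the proof itself.
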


From Theorem \ref{thm:dimGHnXn}, Theorem \ref{thm:dimension} stated in the Introduction follows.

\begin{proof}[Proof of Theorem \ref{thm:dimension}]
By Theorem \ref{thm:dimGHnXn} and Fact \ref{fact:dim-subspace} we have $\dim \GHleq{n} \geq \frac{n(n-1)}{2}$.
By Fact \ref{fact:GHX-closed-Fsigma} and Theorem \ref{thm:dimGHnXn}, $\GHleq{n}$ can be represented as a countable union of closed subspaces with dimension at most $\frac{n(n-1)}{2}$.
Hence, by Fact \ref{fact:countable-sum-thm}, we have $\dim \GHleq{n} \leq \frac{n(n-1)}{2}$. 
Thus $\dim \GHleq{n} = \frac{n(n-1)}{2}$.
Similarly, we see that  $\dim \Xleq{n}=\frac{(n+2)(n-1)}{2}$.
\end{proof}

A topological space $X$ is said to be \emph{strongly countable-dimensional} if $X$ can be represented as a countable union of finite-dimensional closed subspaces.
By Fact \ref{fact:GHX-closed-Fsigma} and Theorem \ref{thm:dimension}, we have the following.

\begin{corollary}\label{GHfinXfin-scd}
$\GHfin$ and $\Xfin$ are strongly countable-dimensional infinite dimensional spaces.
\end{corollary}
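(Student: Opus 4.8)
The plan is to combine the exhaustions $\GHfin=\bigcup_{n\in\N}\GHleq{n}$ and $\Xfin=\bigcup_{n\in\N}\Xleq{n}$ with the dimension computation of Theorem~\ref{thm:dimension}. First I would observe that each $\GHleq{n}$ is closed in $\GHfin$ by Fact~\ref{fact:GHX-closed-Fsigma}(1), and that $\dim\GHleq{n}=\frac{n(n-1)}{2}<\infty$ by Theorem~\ref{thm:dimension}. Since $\N$ is countable, this presents $\GHfin$ as a countable union of finite-dimensional closed subspaces, which is exactly the definition of being strongly countable-dimensional. The argument for $\Xfin$ is verbatim the same, invoking Fact~\ref{fact:GHX-closed-Fsigma}(2) and $\dim\Xleq{n}=\frac{(n+2)(n-1)}{2}<\infty$.

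For infinite-dimensionality, I would use that the inclusion $\GHleq{n}\hookrightarrow\GHfin$ is a topological embedding, so Fact~\ref{fact:dim-subspace} yields $\dim\GHfin\geq\dim\GHleq{n}=\frac{n(n-1)}{2}$ for every $n\in\N$; letting $n$ grow shows $\dim\GHfin=\infty$. The identical reasoning with $\dim\Xleq{n}=\frac{(n+2)(n-1)}{2}$ gives $\dim\Xfin=\infty$.

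I do not expect a genuine obstacle: the statement is a formal consequence of results already established. The only point deserving a moment's attention is the interplay between the two conditions in the conclusion—namely, that the closed pieces of the decomposition are each finite-dimensional (so that "strongly countable-dimensional" applies) while their dimensions are nonetheless unbounded (so that the total space is infinite-dimensional). Both facts are read off directly from the explicit formulas $\frac{n(n-1)}{2}$ and $\frac{(n+2)(n-1)}{2}$, which are finite for every fixed $n$ but diverge as $n\to\infty$.
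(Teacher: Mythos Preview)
Your argument is correct and matches the paper's approach: the corollary is stated there as an immediate consequence of Fact~\ref{fact:GHX-closed-Fsigma} and Theorem~\ref{thm:dimension}, and you have simply written out the two-line derivation that the paper leaves implicit.
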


As a by-product of Corollary \ref{cor:GHnXn-homeo}, we have the following:
\begin{corollary}
\label{cor:MnembXn}
The space $\GHeq{n}$ can be topologically embedded into $\Xeq{n}$.
\end{corollary}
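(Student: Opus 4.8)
The plan is to work with the explicit models furnished by Corollary~\ref{cor:GHnXn-homeo}: it suffices to exhibit a topological embedding of $R_n/{\sim_{GH}}$ into $(R_n\times\sint{\Delta_{n-1}})/{\sim_b}$. The key point is that the uniform vector $s_0\in\sint{\Delta_{n-1}}$, defined by $s_0(i)=1/n$ for every $i\in n$, is fixed by the $\sym{n}$-action, that is, $s_0\circ\sigma=s_0$ for all $\sigma\in\sym{n}$. Consequently the map $\iota\colon R_n\to R_n\times\sint{\Delta_{n-1}}$, $\iota(r)=(r,s_0)$, is a topological embedding onto the closed subspace $R_n\times\{s_0\}$, and it intertwines the two equivalence relations: one checks directly that $r\sim_{GH}r'$ if and only if $\iota(r)\sim_b\iota(r')$, and in fact the $\sim_b$-class of $(r,s_0)$ is exactly $\iota\big([r]_{\sim_{GH}}\big)$, which lies in $R_n\times\{s_0\}$.

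First I would deduce that $\iota$ descends to a continuous bijection $\overline\iota\colon R_n/{\sim_{GH}}\to q_b(R_n\times\{s_0\})$, where the target carries the subspace topology inherited from $(R_n\times\sint{\Delta_{n-1}})/{\sim_b}$, satisfying $\overline\iota\circ q_{GH}=q_b\circ\iota$. Then I would upgrade this to a homeomorphism onto its image. Here one uses that $R_n\times\{s_0\}$ is $q_b$-saturated (its $q_b$-preimage equals itself, by the compatibility above) and that $q_b$ is open, which was established in the proof of Lemma~\ref{lem:dimRDelta/sim}; hence the restriction $q_b|_{R_n\times\{s_0\}}$ is an open surjection onto $q_b(R_n\times\{s_0\})$. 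Since $\iota$ is a homeomorphism onto $R_n\times\{s_0\}$ and $q_{GH}$ is onto, for any open $V\subseteq R_n/{\sim_{GH}}$ we get $\overline\iota(V)=q_b\big(\iota(q_{GH}^{-1}(V))\big)$, an open subset of $q_b(R_n\times\{s_0\})$; thus $\overline\iota$ is open onto its image and therefore a topological embedding.

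Finally, I would compose $\overline\iota$ with the homeomorphisms $\GHeq{n}\cong R_n/{\sim_{GH}}$ and $\Xeq{n}\cong(R_n\times\sint{\Delta_{n-1}})/{\sim_b}$ from Corollary~\ref{cor:GHnXn-homeo} to obtain the desired embedding $\GHeq{n}\hookrightarrow\Xeq{n}$; concretely it sends an isometry class $[X,d_X]$ to the mm-isomorphism class $[X,d_X,\mu]$, where $\mu$ assigns mass $1/n$ to each of the $n$ points. The only part that is not purely formal is the continuity of $\overline\iota^{-1}$, i.e. seeing that $\overline\iota$ is more than a continuous bijection; as the spaces are non-compact this is not automatic, and the argument relies precisely on the openness of $q_b$ recorded earlier. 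All remaining verifications---well-definedness, injectivity, and the compatibility of $\sim_{GH}$ with $\sim_b$ under $\iota$---are routine.
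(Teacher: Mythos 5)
Your proposal is correct and follows essentially the same route as the paper: both pass through the models of Corollary~\ref{cor:GHnXn-homeo}, use the map $r\mapsto(r,s_0)$ with $s_0$ the barycenter (uniform vector) fixed by the $\sym{n}$-action, and derive openness of the induced map on quotients from the openness of $q_b$ established for Lemma~\ref{lem:dimRDelta/sim}. Your saturation argument for $R_n\times\{s_0\}$ is just a slightly more explicit packaging of the step where the paper intersects $q_b(V)$ with the image of $\overline{\varphi}$.
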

\begin{proof}
Let $s_b$ be the barycenter of $\sint{\Delta_{n-1}}$. Then the map $\varphi\colon R_n \to R_n \times \sint{\Delta_{n-1}}; r \mapsto (r,s_b)$ induces a continuous injection $\overline{\varphi}\colon R_n/{\sim_{GH}}\to (R_n \times \sint{\Delta_{n-1}})/{\sim_b}$ 
defined by $\overline{\varphi}(q_{GH}(r))=q_b(\varphi(r))$ for every $r\in R_n$. Let us represent the situation in the following diagram:
$$
\xymatrix{ R_n\ar^{\varphi}[rr]\ar_{q_{GH}}[d] & & R_n \times \sint{\Delta_{n-1}}\ar^{q_{b}}[d]\\
R_n/{\sim_{GH}}\ar^{\overline{\varphi}}[rr] & & (R_n \times \sint{\Delta_{n-1}})/{\sim_b}.
}
$$
Note that $\overline{\varphi}$ is well-defined. Indeed, if $r\sim_{GH}r'$, then  
$(r,s_b)\sim_{b}(r',s_b)$ since $s_b$ is the barycenter of $\sint{\Delta_{n-1}}$.

To show that $\overline{\varphi}\colon R_n/{\sim_{GH}}\to \overline{\varphi}(R_n/{\sim_{GH}})$ is open, let $U$ be an open subset of $R_n/{\sim_{GH}}$. Then $\varphi(q^{-1}_{GH}(U))$ is open in $\varphi(R_n)$ and there exists an open subset $V$ of $R_n \times \sint{\Delta_{n-1}}$ such that $V \cap \varphi(R_n) = \varphi(q^{-1}_{GH}(U))$. Then $q_b(V)$ is open in $(R_n \times \sint{\Delta_{n-1}})/{\sim_b}$ (see the proof of Lemma \ref{lem:dimRDelta/sim}).
Since $s_b$ is the barycenter, we also have $q_b(V)\cap \overline{\varphi}(R_n/{\sim_{GH}})=\overline{\varphi}(U)$.  
Hence  $\overline{\varphi}(U)$ is open in $\overline{\varphi} (R/{\sim_{GH}})$. 

Thus $\overline{\varphi}$ is a topological embedding, and the conclusion follows from Corollary \ref{cor:GHnXn-homeo}.
\end{proof}

\begin{question}
\label{que:MembX}
Can $\GHcpt$ (resp., $\GHfin$, $\GHleq{n}$) be topologically embedded into $\X$ (resp., $\Xfin$, $\Xleq{n}$)?
\end{question}

In the following remark, we briefly discuss how even simple cases of Question \ref{que:MembX} do not immediately follow from Corollary \ref{cor:MnembXn}.

\begin{remark}
The embedding $f_n\colon \GHeq{n}\to \Xeq{n}$ corresponding to $\overline{\varphi}$ in Corollary \ref{cor:MnembXn} is defined as $f([X,d_X])=[X,d_X,\mu_X]$, where $\mu_X$ is the uniform measure on $X$. Note the embedding $f_4\colon \GHeq{4}\to \Xeq{4}$ cannot be extended to a continuous map $\GHleq{4}\to \Xleq{4}$.
Indeed, let $X_k=\{-\frac{1}{k}, 0,1,3 \}$, $Y_k=\{0,1,3,3+\frac{1}{k}\}$, $k \in \N$, and $A=\{0,1,3\}$ be metric subspaces of $\R$. Then both sequences $\{[X_k]\}_k$ and $\{[Y_k]\}_k$ converge to $[A]$ in $\GHleq{4}$. But $\{f([X_k])\}_k$ converges to $[A,d_A,\mu_1]$, where $\mu_1(\{0\})=\frac{1}{2}$ and $\mu_1(\{1\})=\mu_1(\{3\})=\frac{1}{4}$, and $\{f([Y_k])\}_k$ converges $[A,d_A,\mu_2]$, where $\mu_2(\{0\})=\mu_2(\{1\})=\frac{1}{4}$ and $\mu_1(\{3\})=\frac{1}{2}$. Thus $\{f([X_k])\}_k$ and $\{f([Y_k])\}_k$ converge to different points in $\Xleq{4}$.
\end{remark}

\section{Proof of Theorem \ref{thm:HC-X}}\label{sec:proof-of-theoremB}

For the sake of convenience, we first give a direct proof of the following theorem, which is obtained from \cite[Theorem 1.3]{ishiki:22}.
Let $d$ denote the usual metric on $[0,1]$.

\begin{theorem}[{\cite{ishiki:22}}]
\label{thm:HC-GHcpt}
The Hilbert cube $[0,1]^\mathbb{N}$ can be topologically embedded into $\GHcpt$.
\end{theorem}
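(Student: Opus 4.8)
The plan is to construct, for a suitable null sequence of scales, an explicit family of finite metric spaces indexed by the Hilbert cube whose Gromov-Hausdorff distances encode the coordinates faithfully, so that the assignment is a homeomorphism onto its image. The basic idea, following \cite{ishiki:22}, is to work with metric spaces that are ``large'' at a fixed scale and have small, controllable perturbations at much finer scales: these fine features can be read off by the Gromov-Hausdorff distance without interference from each other or from the coarse structure. Concretely, I would fix a rapidly decreasing sequence of positive reals $(\varepsilon_k)_{k\in\N}$ (for instance $\varepsilon_k = 4^{-k}$) and, for each point $t=(t_k)_{k\in\N}\in[0,1]^\N$, build a compact metric space $X_t$ consisting of a fixed ``skeleton'' (say, a single base point, or a convergent sequence of points at scales $\varepsilon_k$) together with, for each $k$, an extra point at distance depending affinely on $t_k$ within the window $[\varepsilon_{k+1},\varepsilon_k]$ from the corresponding skeleton point. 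Completeness/compactness is ensured because all but finitely many added points lie within any prescribed neighbourhood of the accumulation point of the skeleton.

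The key steps, in order, are: (i) define the spaces $X_t$ precisely and verify each is compact; (ii) prove a \emph{separation-of-scales lemma}: if $\GH(X_t,X_{t'})$ is small enough relative to $\varepsilon_k$, then any near-optimal correspondence must match the skeleton points and the $j$-th extra point of $X_t$ to those of $X_{t'}$ for all $j\le k$, so that $\GH(X_t,X_{t'})$ is controlled above and below by $\sup_{j\le k}|t_j-t'_j|$ up to a factor comparable to $\varepsilon_{k+1}$ (and $\le C\varepsilon_{k+1}$ regardless of the tail); (iii) deduce from (ii) that $t\mapsto [X_t]$ is injective and that it is a homeomorphism onto its image, by checking continuity in both directions --- continuity of $t\mapsto[X_t]$ from the upper bound $\GH(X_t,X_{t'})\le C\sup_j \varepsilon_{j}\,|t_j-t'_j|/\ldots$, and continuity of the inverse from the lower bounds coordinate by coordinate, using that $[0,1]^\N$ is compact and the map is a continuous bijection onto its image (so the inverse is automatically continuous, which actually lets me skip the hardest quantitative estimate).

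The main obstacle I anticipate is step (ii), the rigidity/separation-of-scales lemma: one must rule out ``cross-scale'' matchings in correspondences --- e.g. an optimal correspondence that pairs the extra point at scale $\varepsilon_k$ of $X_t$ with a skeleton point, or with an extra point of a different scale, of $X_{t'}$. This is exactly the kind of combinatorial control that Lemma \ref{lem:relation-bijection2} provides in the finite setting via the separation $\sep X$; here, since the spaces are infinite (but compact), I would truncate: for a fixed $k$, the restriction of $X_t$ to the ``scale $\ge \varepsilon_{k}$ part'' is a finite metric space with separation comparable to $\varepsilon_k$, a near-optimal correspondence restricts to a near-correspondence on these finite pieces, and Lemma \ref{lem:relation-bijection2} (or its proof technique) forces the matching to be the obvious bijection, up to errors swallowed by the tail $\sum_{j>k}\varepsilon_j$. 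Once this rigidity is in hand, the two-sided distance estimate and hence the topological embedding follow routinely; invoking compactness of $[0,1]^\N$ to get that a continuous bijection onto its image is a homeomorphism further shortens the argument, so the only genuinely delicate point is the scale-separation bookkeeping.
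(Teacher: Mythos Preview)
Your proposal is correct and would work, but it takes a heavier route than the paper does, and in particular the step you flag as the ``main obstacle'' can be bypassed entirely.

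The paper's construction is a comb: for $t\in[0,1]^\N$ it sets
\[
C_t=\{(0,0)\}\cup\bigcup_{i\in\N}\bigl(\{2^{-i+2}\}\times[0,2^{-i}(1+t(i))]\bigr)\subset(\R^2,\ell_1),
\]
so each $C_t$ is a union of vertical segments together with their accumulation point $(0,0)$. Continuity of $t\mapsto[C_t]$ is obtained, as in your plan, by the obvious upper bound --- here simply the Hausdorff distance of $C_s$ and $C_t$ inside the common ambient $(\R^2,\ell_1)$ --- and the homeomorphism-onto-image conclusion is drawn, again as you propose, from compactness of $[0,1]^\N$.

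The genuine difference is injectivity. You plan a separation-of-scales rigidity lemma forcing any near-optimal correspondence to match ``level $k$ with level $k$'', essentially an infinite-space analogue of Lemma~\ref{lem:relation-bijection2}. The paper avoids this completely: $(0,0)$ is the unique non-isolated point of $C_t$ (in your discrete construction it would be the unique accumulation point), so any isometry $C_s\to C_t$ must fix it, and hence the \emph{distance set from this basepoint} is an isometry invariant. For $C_t$ this set is $\{0\}\cup\bigcup_i[2^{-i+2},\,2^{-i+2}+2^{-i}(1+t(i))]$, which determines $t$; thus $s\ne t$ implies $[C_s]\ne[C_t]$ in two lines, with no correspondence analysis at all. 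The same trick applies verbatim to your discrete model (the set of distances from the limit point is $\{\varepsilon_k\}\cup\{\text{affine}(t_k)\}$ and recovers $t$), so your anticipated ``delicate point'' --- ruling out cross-scale matchings --- is not actually needed for this theorem. Your rigidity lemma would buy quantitative two-sided control of $\GH$ in terms of the coordinates, which is more than the statement requires.
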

\begin{proof}
For $t \in [0,1]^\N$, let  
\begin{align}
\label{eq:Ct}
C_t=\{(0,0)\}\cup \bigcup_{i \in \N} (\{ 2^{-i+2}\} \times [0,2^{-i}(1+t(i))])
\end{align}
with the metric $d_t$ induced by the $\ell_1$-metric on $\R^2$ (see Figure \ref{fig:C_t} for a representation of the space).
Define $\iota_{GH} \colon [0,1]^\N \to \GHcpt$ by $\iota_{GH} (t) = [C_t,d_t]$ for $t \in [0,1]^\N$.

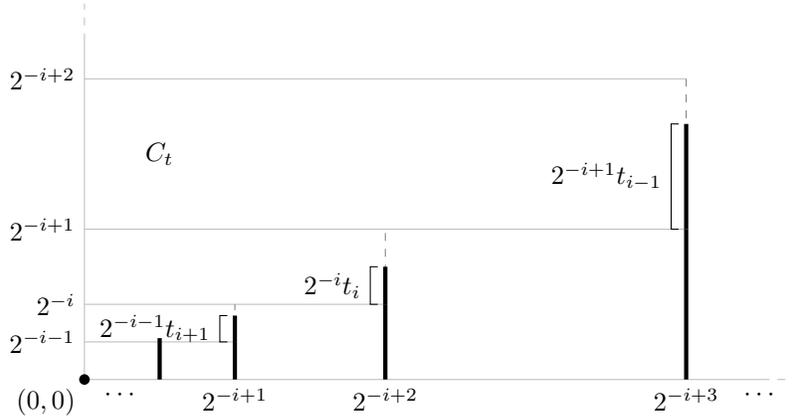
\begin{figure}[h!]
\centering
\begin{tikzpicture}
\draw[lightgray] (0,0)--(9,0) (0,0)--(0,4.5);
\draw[lightgray,dashed] (9,0)--(9.5,0) (0,4.5)--(0,5);
\draw[lightgray] (8,4)--(0,4);
\draw[lightgray] (8,2)--(0,2);
\draw[lightgray] (4,1)--(0,1);
\draw[lightgray] (2,0.5)--(0,0.5);
\draw (0,4) node[left]{$2^{-i+2}$};
\draw (0,2) node[left]{$2^{-i+1}$};
\draw (0,1) node[left]{$2^{-i}$};
\draw (0,0.5) node[left]{$2^{-i-1}$};
\fill (0,0) circle (2pt);
\draw (0,0) node[below left]{$(0,0)$};
\draw[gray,dashed] (2,0.5)--(2,1) (4,1)--(4,2) (8,2)--(8,4);
\draw[ultra thick] (2,0)--(2,0.5) (4,0)--(4,1) (8,0)--(8,2);
\draw (2,0) node[below]{$2^{-i+1}$};
\draw (4,0) node[below]{$2^{-i+2}$};
\draw (8,0) node[below]{$2^{-i+3}$};
\draw (1,3) node{$C_t$};
\draw[ultra thick] (2,0.5)--(2,0.85) (4,1)--(4,1.5) (8,2)--(8,3.4);
\draw (1.9,0.5)--(1.8,0.5)--(1.8,0.85)node[pos=0.5,left]{$2^{-i-1}t_{i+1}$}--(1.9,0.85);
\draw (3.9,1)--(3.8,1)--(3.8,1.5)node[pos=0.5,left]{$2^{-i}t_{i}$}--(3.9,1.5);
\draw (7.9,2)--(7.8,2)--(7.8,3.4)node[pos=0.5,left]{$2^{-i+1}t_{i-1}$}--(7.9,3.4);
\draw (0.5,0) node[below]{$\cdots$};
\draw (9,0) node[below]{$\cdots$};
\draw[ultra thick] (1,0)--(1,0.55);
\end{tikzpicture}
\caption{A representation of the space $C_t$ defined in the proof of Theorem \ref{thm:HC-GHcpt}. }\label{fig:C_t}
\end{figure}

To show that $\iota_{GH}$ is injective, let $s,t \in [0,1]^\N$ with $s\ne t$. 
Then there exists $i' \in \N$ such that $s(i')\ne t(i')$. 
We may assume $s(i')<t(i')$ without loss of generality. 
Then $(2^{-i'+2},2^{-i'}(1+t(i')) ) \in C_t$, 
$$d_t ((0,0), (2^{-i'+2},2^{-i'}(1+t(i')) ) ) =2^{-i'+2} + 2^{-i'}(1+t(i'))$$ and there is no point $x \in C_s$ satisfying $d_s ((0,0),x) = 2^{-i'+2} + 2^{-i'}(1+t(i'))$ 
(note that an isometry between $C_t$ and $C_s$ must associate the two copies of $(0,0)$). 
Hence $C_s$ and $C_t$ are not isometric and $\iota_{GH}(s)\ne \iota_{GH}(t)$. Thus $\iota_{GH}$ is injective.

To show that $\iota_{GH}$ is continuous, let $t \in [0,1]^\N$ and $\varepsilon>0$. 
Choose $i_0 \in \N$ such that $2^{-i_0}<\varepsilon$. 
Put  $$U=\prod_{i=1}^{i_0} B_d(t(i),\varepsilon) \times \prod_{i=i_0+1}^{\infty}[0,1].$$
Then $U$ is a neighborhood of $t$ in $[0,1]^\N$. 
Let $s \in U$ and let $d_H$ be the Hausdorff distance with respect to the $\ell_1$-metric on $\R^2$. 
Then we see that $$d_{GH}(\iota_{GH}(s),\iota_{GH}(t)) \leq d_{H} (C_s,C_t)<\varepsilon,$$ and hence $\iota_{GH}$ is continuous. 

Since $[0,1]^\N$ is compact, $\iota_{GH}$ is a topological embedding.
\end{proof}

\begin{proof}[Proof of Theorem \ref{thm:HC-X}]
For $t \in [0,1]^\N$, let $(C_t,d_t)$ be the metric space defined in \eqref{eq:Ct}.
We define a Borel probability measure $\mu_t$ on $C_t$ as follows: Let $\nu$ denote the Lebesgue measure on $[0,1]$.
Let $i \in \mathbb{N}$ and define a Borel measure $\mu_{t,i}$ on $[0,2^{-i}(1+t(i))]$ by 
\begin{align*}
\mu_{t,i} (A) = \frac{\nu(A)}{1+t(i)} 
\end{align*}
for each Borel subset $A$ of $[0,2^{-i}(1+t(i))]$. Then $\mu_{t,i} ([0,2^{-i}(1+t(i))]) =2^{-i}$.
For a Borel subset $B$ of $C_t$, let 
\begin{align}
\label{eq:HC-X:Bi}
B_i=\{x \in [0,2^{-i}(1+t(i))] \mid (2^{-i+2},x) \in B\}.
\end{align}
Define a Borel measure $\mu_t$ on $C_t$ by 
$$\mu_t(B) =\sum_{i=1}^\infty \mu_{t,i}(B_i)$$
for each Borel subset $B$ of $C_t$.  Then $\nu_t(C_t)=1$ and $\supp\nu_t =C_t$.

Define $\iota_{b} \colon [0,1]^\N \to \X$ by $\iota_{b} (t) = [C_t,d_t,\mu_t]$ for $t \in [0,1]^\N$.
By the same reason as in the proof of Theorem \ref{thm:HC-GHcpt}, $\iota_{b}$ is injective.
Therefore it suffices to show that $\iota_b$ is continuous.
Let $t \in [0,1]^\N$ and $\varepsilon>0$.
Choose $i_*, m \in \N$ such that 
$2^{-i_*}<\frac{\varepsilon}{4}$ and $\frac{1}{m}<\frac{\varepsilon}{4}$,
and define a neighborhood $V$ of $x$ by 
$V=\prod_{i=1}^{i_*} B_d(t(i),\frac{\varepsilon}{4}) \times \prod_{i=i_*+1}^{\infty}[0,1]$. 
Let $s \in V$. 
Note that 
\begin{align}
\label{eq:HC-H:|s(i)-t(i)|}
\forall i \in \N,\, 2^i|s(i)-t(i)|<\frac{\varepsilon}{4}.
\end{align}
We show that $\square ((C_t,d_t,\mu_t), (C_s,d_s,\mu_s))\leq \varepsilon$.

Fix $i \in \N$, $u \in \{s,t\}$ and $j \in m$, and put
\begin{align*}
I_{u,i,j}=  \left [\frac{2^{-i}(1+u(i))j}{m}, \frac{2^{-i}(1+u(i))(j+1)}{m} \right  ].
\end{align*}
Then, 
\begin{align}
\label{eq:HC-X:diamI}
\diam_d I_{u,i,j}=\frac{2^{-i}(1+u(i))}{m}\leq \frac{1}{m}
<\frac{\varepsilon}{4}
\end{align}
and 
\begin{align}
\label{eq:HC-X:muI}
\mu_{u,i}(I_{u,i,j}) = 
\frac{1}{2^im}.
\end{align}

Let $\pi_{i,j}$ be the restriction of the product measure of $\mu_{s,i}$ and $\mu_{t,i}$ to $I_{s,i,j} \times I_{t,i,j}$.
For a Borel subsets $A$ of $C_s$ (resp., $A'$ of $C_t$), 
$A_i\cap I_{s,i,j}$ (resp., $A'_i\cap I_{t,i,j}$) is a Borel subset of $I_{s,i,j}$ (resp., $I_{t,i,j}$), where $A_i$ (resp., $A'_i$) is the subset of $[0,2^{-i}(1+s(i))]$ (resp., $[0,2^{-i}(1+t(i))]$) defined in \eqref{eq:HC-X:Bi}, and we have
\begin{align*}
&\pi_{i,j} ( (A_i \cap I_{s,i,j}) \times  I_{t,i,j})=\frac{\mu_{s,i}(A_i \cap I_{s,i,j})}{2^im},
\\
&\pi_{i,j} ( I_{s,i,j} \times (A'_i \cap I_{t,i,j}) )=\frac{\mu_{t,i}(A'_i \cap I_{t,i,j})}{2^im}.
\end{align*}

Unfix $i \in \N$, $u\in \{s,t\}$ and $j \in m$.
For a Borel subset $B$ of $C_s\times C_t$, $i \in \N$ and $j \in m$, let
\begin{align*}
B_{i,j}= \{(x,y)\in I_{s,i,j}\times I_{t,i,j} \mid ((2^{-i+2}, x), (2^{-i+2},y)) \in B  \},
\end{align*}
and define a Borel measure $\pi$ on $C_s \times C_t$ by 
\begin{align*}
\pi (B) = \sum_{i\in \N}\sum_{j\in m}2^i m\, \pi_{i,j} (B_{i,j})
\end{align*}
for each Borel subset $B$ of $C_s \times C_t$.
Then $\pi$ is a coupling between $\mu_s$ and $\mu_t$.
Indeed, for a Borel subset $A$ of $C_s$, we have
\begin{align*}
\pi (A \times C_t) 
&= \sum_{i\in \N }\sum_{j\in m}2^i m\, \pi_{i,j} ((A \times C_t)_{i,j})
\\ 
&= \sum_{i\in \N }\sum_{j\in m}2^i m\, \pi_{i,j} ((A_i \cap I_{s,i,j}) \times  I_{t,i,j})
\\
&= \sum_{i\in \N }\sum_{j\in m}\mu_{s,i}(A_i \cap I_{s,i,j})
= \sum_{i\in \N} \mu_{s,i}(A_i ) = \mu_s(A),
\end{align*}
and similarly, $\pi(C_s\times A') =\mu_t(A')$ for every Borel subset $A'$ of $C_t$.

Let $2^{-\infty +2} =0$, $s(\infty)=t(\infty)=0$, $m_\infty=0$ and $I_{s,\infty,0}=I_{t,\infty,0}$ for the sake of convenience, and put 
\begin{align}\label{eq:HC-X:S}
\begin{split}
S&=\bigcup_{i \in \N\cup \{\infty\}}\bigcup_{j\in m}(\{2^{-i+2}\} \times I_{s,i,j}) \times (\{2^{-i+2}\} \times I_{t,i,j})
\\&=\{((0,0),(0,0))\}\cup \bigcup_{i \in \N}\bigcup_{j\in m} (\{2^{-i+2}\} \times I_{s,i,j}) \times (\{2^{-i+2}\} \times I_{t,i,j}).
\end{split}
\end{align}
Then $S$ is a closed subset of $C_s\times C_t$ and, by \eqref{eq:HC-X:muI}, 
\begin{align*}
\pi(S) 
&= \sum_{i\in \N }\sum_{j\in m}2^i m\, \pi_{i,j} (S_{i,j})
= \sum_{i\in \N }\sum_{j\in m}2^i m\, \pi_{i,j} (I_{s,i,j}\times I_{t,i,j})
\\
& =\sum_{i\in \N }\sum_{j\in m}2^i m\, \mu_{s,i,j}(I_{s,i,j}) \mu_{t,i,j} ( I_{t,i,j})  
= \sum_{i\in \N }\sum_{j\in m}\frac{1}{2^i m} 
=1.
\end{align*}
It suffices to show that $\dis S\leq \varepsilon$.
For this purpose, let $(x_0,y_0), (x_1,y_1) \in S$.
Then, by \eqref{eq:HC-X:diamI} and \eqref{eq:HC-X:S}, for $k \in 2$ there exist $i_k \in \N\cup \{\infty\}$, $j_k \in m_{i_k}+1$ and $a_k, b_k \in [0,\frac{\varepsilon}{4})$ such that
\begin{align*}
x_k &=\left (2^{-i_k+2}, \frac{2^{-i_k}(1+s(i_k))j_k}{m}+  a_k \right) \text{ and } 
\\
y_k &=\left(2^{-i_k+2},\frac{2^{-i_k}(1+t(i_k))j_k}{m}+b_k\right).
\end{align*}
Then 
\begin{align*}
d_s(x_0,x_1) &= |2^{-i_0+2}-2^{-i_1+2}| + \left|\frac{2^{-i_0}(1+s(i_0))j_0}{m}+a_0-\left(\frac{2^{-i_1}(1+s(i_1))j_1}{m}+a_1\right)\right|, 
\\
d_t(y_0,y_1) &= |2^{-i_0+2}-2^{-i_1+2}| + \left|\frac{2^{-i_0}(1+t(i_0))j_0}{m}+b_0-\left(\frac{2^{-i_1}(1+t(i_1))j_1}{m}+b_1\right)\right|.
\end{align*}
Since $||a-a'|-|b-b'||\leq |a-b|+|a'-b'|$ for any $a,a',b,b' \in \R$, by \eqref{eq:HC-H:|s(i)-t(i)|} we have
\begin{align*}
&\, |d_s(x_0,x_1)-d_t(y_0,y_1)| 
\\
\leq &\,\left|\frac{2^{-i_0}(1+s(i_0))j_0}{m}+a_0 - \left(\frac{2^{-i_0}(1+t(i_0))j_0}{m}+b_0\right) \right|
\\
& \qquad +
\left|\frac{2^{-i_1}(1+s(i_1))j_1}{m}+a_1 - \left(\frac{2^{-i_1}(1+t(i_1))j_1}{m}+b_1\right) \right|
\\
\leq 
& \,2^{-i_0}|s(i_0)-t(i_0)|\frac{j_0}{m}  +|a_0-b_0| +2^{-i_1}|s(i_1)-t(i_1)|\frac{j_1}{m}  +|a_1-b_1|
\\< &\, \frac{\varepsilon}{4}+\frac{\varepsilon}{4}+\frac{\varepsilon}{4}+\frac{\varepsilon}{4} =\varepsilon.
\end{align*}
Therefore $\dis S\leq \varepsilon$. Thus $\iota_b$ is a topological embedding.
\end{proof}

\end{document}